\documentclass{amsart}
\usepackage{graphicx,amsmath,amssymb,amsfonts,amsthm}
\usepackage[all,cmtip]{xy}
\usepackage{hyperref}
\vfuzz2pt 
\hfuzz2pt 
\newtheorem{thm}{Theorem}
\newtheorem{cor}[thm]{Corollary}
\newtheorem{lem}[thm]{Lemma}

\newtheorem{alg}[thm]{Algorithm}
\theoremstyle{definition}

\newtheorem{ex}[thm]{Example}
\theoremstyle{remark}
\newtheorem{rem}[thm]{Remark}
\numberwithin{equation}{section}

\newcommand{\FF}{\mathbb{F}}

\newcommand{\Spec}{ \mathrm{Spec}}

\newcommand{\idealin}{\vartriangleleft}

\newcommand{\ord}{\mathrm{ord}}

\newcommand{\Sch}{\mathsf{Sch}}

\newcommand{\OO}{\mathcal{O}}

\newcommand{\ZZ}{\mathbb{Z}}

\newcommand{\GL}{\mathrm{GL}}
\newcommand{\NN}{\mathbb{N}}
\renewcommand{\AA}{\mathbb{A}}
\newcommand{\Ad}{\mathrm{Ad}}
\newcommand{\Aut}{\mathrm{Aut}}

\newcommand{\Gr}{\mathrm{Gr}}
\newcommand{\at}{\tilde{a}}
\newcommand{\psit}{\widetilde{\psi}}
\newcommand{\ft}{\widetilde{f}}
\newcommand{\gt}{\widetilde{g}}

\newcommand{\At}{\widetilde{A}}
\renewcommand{\mod}{\ \mathrm{mod} \ }

\newcommand{\lift}{\mathrm{lift}}
\newcommand{\id}{\mathrm{id}}

\title{Automorphisms of the Affine Line over Non-Reduced Rings}
\author{Taylor Dupuy}

\begin{document}

\begin{abstract}
The affine space $\AA^1_B$ only has automorphisms of the form $aT+b$ when $B$ is a domain. In this paper we study univariate polynomial automorphisms over non-reduced rings $B$. Geometrically these groups appear naturally as transition maps of affine bundles in arithmetic geometery.
\end{abstract}

\maketitle

\tableofcontents

\section{Introduction }
Throughout this paper $p\in \NN$ will denote a prime and all rings will be commutative with a unit. For a ring $B$ we use the notation $\Aut(\AA^1_{B}) = \Aut_{B}(B[T])^{op}$ and identify this group with collections of polynomials under composition. 

For $R$ be a $q$-torsion free ring where $q\in R$ and $qR\in \Spec(R)$ we will let $R_{d} = R/q^{d+1}$. 
In this paper we prove three theorems about univariate polynomial automorphisms over such rings $R$. 
Two important examples to keep in mind are $R_n = K[t]/(t^{n+1})$ for $K$ a field and $R_n = \ZZ/p^{n+1}$ which are the ``geometric'' and ``arithmetic'' cases in what follows.

\begin{thm}\label{thm: big}
 For $R$ a $q$-torsion free ring where $q\in R$ and $qR\in \Spec(R)$ the collection
 $$ \At_d(R,q) := \lbrace \psi \in \Aut( \AA^1_{R_{n-1}}): \forall m\geq 2, \ \ \deg(\psi \mod q^{m} ) \leq d 2^{m-2} \rbrace $$
 forms a subgroup under composition.
\end{thm}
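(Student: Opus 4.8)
The plan is to verify the three subgroup axioms---identity, closure under composition, and closure under inverses---after first recording a coefficientwise reformulation of the membership condition. Write an element as $\psi = \sum_i a_i T^i$ and let $v(a)$ denote the $q$-adic valuation of a coefficient $a$ (finite since we work over $R_{n-1} = R/q^n$). Unwinding the defining inequalities, $\psi \in \At_d(R,q)$ precisely when, for every $m$ with $2 \le m \le n$, each coefficient of index $i > d\,2^{m-2}$ lies in $q^m R$; equivalently, every coefficient $a_i$ satisfies $i \le d\, 2^{\max(v(a_i),1)-1}$. Two structural facts drive the argument. First, since $qR \in \Spec(R)$ the quotient $R/qR$ is a domain, so the reduction $\psi \bmod q$ is an affine automorphism $a_1 T + a_0$ with $a_1$ a unit; hence $v(a_i) \ge 1$ for all $i \ge 2$. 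Second, I will repeatedly use the elementary superadditivity $2^{x_1} + \dots + 2^{x_t} \le 2^{x_1 + \dots + x_t}$ valid for integers $x_s \ge 1$ (immediate from $(2^a-1)(2^b-1)\ge 1$). The identity $T$ lies in $\At_d(R,q)$ as soon as $d \ge 1$, so it remains to treat composition and inverses.

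For composition, take $\psi = \sum_i a_i T^i$ and $\phi = \sum_j b_j T^j$ in $\At_d(R,q)$ and bound $\deg(\psi\circ\phi \bmod q^m)$. It suffices to show that every monomial $a_i\, b_{j_1}\cdots b_{j_i}$ occurring in the expansion of $\psi(\phi(T))$ whose total valuation $V := v(a_i) + \sum_{s} v(b_{j_s})$ is at most $m-1$ contributes to a power $T^{k}$ with $k = \sum_s j_s \le d\,2^{m-2}$; the remaining monomials vanish mod $q^m$ and cannot raise the degree. Here the affine-mod-$q$ structure is essential: a factor $b_{j_s}$ with $j_s \ge 2$ has $v(b_{j_s}) \ge 1$, while factors with $j_s \in \{0,1\}$ carry no index beyond $1$ and contribute at most $1$ each to $k$. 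Splitting the $i$ factors accordingly, writing $u = v(a_i)$ and $W = \sum_{\text{high } s} v(b_{j_s})$, the defining bounds give $\sum_{\text{high } s} j_s \le d\,2^{W-1}$ while the number of low factors is at most $i$, and $i \le d\,2^{u-1}$ when $i \ge 2$; hence $k \le d(2^{u-1}+2^{W-1}) \le d\,2^{(u+W)-1} \le d\,2^{m-2}$, using $u+W \le V \le m-1$ and superadditivity. The degenerate cases ($i \le 1$, or no high factors so $W=0$) are checked directly against $i \le d\,2^{u-1}$ and $u \le m-1$. I expect this estimate to be the crux of the whole theorem: the naive bound $\deg(\psi\circ\phi) \le \deg\psi\cdot\deg\phi \approx (d\,2^{m-2})^2$ is far too weak, and only the interplay between index and valuation---packaged in the two convexity inequalities---recovers the correct bound that is linear in $d$ and exponential in $m$.

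For inverses, I solve $\psi(\phi(T)) = T$ by $q$-adic iteration. Writing $\psi = \ell + \eta$ with $\ell = a_0 + a_1 T$ the affine part and $\eta = \sum_{i\ge2} a_i T^i$ (so every coefficient of $\eta$ is divisible by $q$), the inverse is the fixed point of $\phi \mapsto a_1^{-1}\bigl(T - a_0 - \eta(\phi)\bigr)$. Setting $\phi_0 = a_1^{-1}(T-a_0)$ and iterating, the differences $\phi_{k+1}-\phi_k$ acquire an extra factor of $q$ at each step, so the sequence stabilizes modulo $q^n$ after finitely many steps and its limit is $\psi^{-1}$ in $\Aut(\AA^1_{R_{n-1}})$. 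It therefore suffices to show each $\phi_k \in \At_d(R,q)$, which I prove by induction: precomposing $\eta$ with $\phi_k$ is covered by exactly the monomial estimate above---that estimate used only the valuation--degree profile of the outer series and the affine-mod-$q$ property, both of which $\eta$ satisfies even though $\eta$ is not invertible---and the subsequent operation $a_1^{-1}(T - a_0 - (\cdot))$ only rescales by a unit and alters coefficients of index $\le 1$, leaving membership intact. Hence $\psi^{-1} = \phi_k \in \At_d(R,q)$ for $k \ge n$, completing the verification that $\At_d(R,q)$ is a subgroup.
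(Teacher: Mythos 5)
Your argument is correct, and it reaches the theorem by a genuinely different route from the paper's. The paper fixes the decomposition $\psi = a_0 + a_1T + qf_1 + \cdots + q^{n-1}f_{n-1}$, Taylor-expands each $f_i(\psit)$ about the affine part $\ft_0$ of the inner automorphism, bounds the resulting block $q^{i+j}f_i^{(j)}(\ft_0)A^j/j!$ by $d_{n-j-1}-j+jd_{n-i-j}$, and closes an induction on $n$ with the inequality $2^{-j}+j2^{-j-2}\le 1$. You instead recast membership as the per-coefficient condition $i \le d\,2^{\max(v(a_i),1)-1}$ and track each monomial $a_i b_{j_1}\cdots b_{j_i}$ of the composite individually, trading index against valuation via the superadditivity $2^{u}+2^{W}\le 2^{u+W}$; this dispenses with both the induction on $n$ and the Taylor expansion, and makes it more transparent why the doubling rate $2^{m-2}$ is exactly the right growth. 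The two core inequalities are close cousins, but your bookkeeping is finer-grained (individual monomials rather than Taylor blocks), and your reformulation of the membership condition is a reusable observation in its own right. A genuine added value of your write-up is the treatment of inverses: the paper verifies closure under inverses only in the base case $n=2$ (via the remark that $\deg(\psi\circ\psit)=\max$ when the degrees differ), while its inductive step checks only closure under composition; your $q$-adic fixed-point iteration $\phi_{k+1}=a_1^{-1}(T-a_0-\eta(\phi_k))$ constructs $\psi^{-1}$ explicitly and shows it satisfies the same degree profile, precisely because your composition estimate uses only the valuation--degree profile of the outer series (which $\eta$ inherits from $\psi$) and the affine-mod-$q$ shape of the inner one, not invertibility. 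The only points worth polishing are cosmetic: state explicitly that $d\ge 1$, and note that coefficients that vanish in $R_{n-1}$ (where $v$ is not finite) impose no constraint and can be ignored throughout.
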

This implies for example that for every $R$ a $q$-torsion free ring with $qR\in \Spec(R)$ and every $\psi \in \Aut(\AA^1_{R_n})$ that every iterate of $\psi$ has bounded degree.
In particular it implies that every $\psi \in \Aut(\AA^1_{\ZZ/p^{n+1}})$ has finite order.

We define the set of polynomials $A_d(R,q)$ which are of the form 
 \begin{equation}\label{eqn: form of polynomial}
 f(T) \equiv a_0 + a_1 T + q a_2 T^2 + q^2 a_3 T^4 + \cdots + q^{d-1} a_d T^d \in R_{d-1}[T]
 \end{equation}
which are invertible under composition. 
It was proved in \cite{Dupuy2013} that for any $q$-torsion free ring $R$ the set $A_{d}(R) \subset \Aut(\AA^1_{R_{n-1}})$ under composition is a group. 
We reproduce this proof in Example \ref{prop:A2_is_a_subgroup}. 

When $R=\ZZ$ and $q=p$ we have that the polynomial $f(T) = 1 + T + p T^2 + p^2 T^3 + p^3 T^4$ has finite order under composition mod $p^4$ and that every iterate has degree less or equal to four mod $p^4$.

\begin{thm}\label{thm:greenberg}
 \begin{itemize}
  \item There exists a finite dimensional group scheme over $\FF_p$ whose group of $\FF_p$-points is isomorphic to $A_n(\ZZ,p)$.
  \item There exists (an infinite dimemsional) group scheme over $\FF_p$ whose group of $\FF_p$-points is isomorphic to $\Aut(\AA^1_{\ZZ/p^n})$.
 \end{itemize}

\end{thm}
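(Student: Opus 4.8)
The plan is to realize each automorphism group as the group of $\FF_p$-points of an object built from a group (ind-)scheme over $\ZZ/p^n$ by the Greenberg realization functor. The key input is the identification $\ZZ/p^n = W_n(\FF_p)$: for this base the Greenberg functor $\Gr_n$ sends a scheme $X$ over $\ZZ/p^n$ to a scheme $\Gr_n(X)$ over $\FF_p$ with $\Gr_n(X)(A) = X(W_n(A))$ for every $\FF_p$-algebra $A$, so that in particular $\Gr_n(X)(\FF_p) = X(\ZZ/p^n)$. Since $\Gr_n$ preserves finite products, closed immersions and finiteness of type, it carries group schemes over $\ZZ/p^n$ to group schemes over $\FF_p$ and is compatible with the formation of $\FF_p$-points. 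It therefore suffices to exhibit $A_n(\ZZ,p)$ and $\Aut(\AA^1_{\ZZ/p^n})$ as the $\ZZ/p^n$-points of suitable group (ind-)schemes over $\ZZ/p^n$.

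First I would represent automorphisms by their inverse pairs. For each $N$ let $\widetilde G_N$ be the functor sending a $\ZZ/p^n$-algebra $B$ to the set of pairs $(f,g)\in B[T]^2$ with $\deg f,\deg g\le N$ and $f\circ g = g\circ f = T$. Writing $f=\sum_{i=0}^N a_iT^i$ and $g=\sum_{j=0}^N b_jT^j$, the two relations become explicit polynomial equations in the coefficients, so $\widetilde G_N$ is a closed subscheme of $\AA^{2N+2}_{\ZZ/p^n}$ and is of finite type over $\ZZ/p^n$. The maps $((f_1,g_1),(f_2,g_2))\mapsto (f_1\circ f_2,\,g_2\circ g_1)$ and $(f,g)\mapsto (g,f)$ are morphisms, giving a composition $\widetilde G_N\times\widetilde G_M\to\widetilde G_{NM}$ and an inversion; the inclusions $\widetilde G_N\hookrightarrow\widetilde G_{N+1}$ (set $a_{N+1}=b_{N+1}=0$) are closed immersions compatible with these, so $\widetilde G:=\varinjlim_N\widetilde G_N$ is a group ind-scheme over $\ZZ/p^n$. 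Since an automorphism and its unique compositional inverse each have a finite degree, $\widetilde G(B)=\Aut(\AA^1_B)$ for every $B$. This is the step where carrying the pair $(f,g)$ pays off: invertibility is recorded by the closed equations $f\circ g=g\circ f=T$, so I never have to analyze the (non-closed) nilpotence conditions on the higher coefficients nor separately bound the degree of the inverse.

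For the second statement I apply $\Gr_n$ to each term: $\Gr_n(\widetilde G_N)$ is a finite-type $\FF_p$-scheme with $\Gr_n(\widetilde G_N)(\FF_p)=\widetilde G_N(\ZZ/p^n)$, the transition maps stay closed immersions, and $\Gr_n(\widetilde G):=\varinjlim_N\Gr_n(\widetilde G_N)$ is an infinite-dimensional (not of finite type) group ind-scheme over $\FF_p$. Since every $\FF_p$-point of the colimit factors through a finite stage, taking $\FF_p$-points commutes with the colimit, so $\Gr_n(\widetilde G)(\FF_p)=\varinjlim_N\widetilde G_N(\ZZ/p^n)=\Aut(\AA^1_{\ZZ/p^n})$, which is the infinite-dimensional assertion. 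For the first statement I cut $A_n(\ZZ,p)$ out of a single $\widetilde G_N$: by \eqref{eqn: form of polynomial} its elements have bounded degree and the coefficient of $T^i$ divisible by $p^{i-1}$, and the analogous constraints hold for the inverse; these are closed conditions on the coordinates, and by the group property from \cite{Dupuy2013} the resulting closed subscheme $H\subset\widetilde G_N$ is stable under composition and inversion, hence a finite-type (in fact finite) group subscheme. Then $\Gr_n(H)$ is a finite-dimensional group scheme over $\FF_p$ with $\Gr_n(H)(\FF_p)=A_n(\ZZ,p)$.

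I expect the main obstacle to be the two bookkeeping verifications around the non-reduced base rather than any single deep step: namely, checking that $\Gr_n$ really does transport the composition law to a morphism of $\FF_p$-schemes — this is the statement that composing polynomials whose coefficients are Witt vectors is again given by universal polynomials in the Witt coordinates — and matching the mod-$p^{i-1}$ divisibility conditions of \eqref{eqn: form of polynomial}, which are phrased over the truncated ring, with genuine closed conditions on the Witt-vector coordinates so that $H$ is visibly closed in $\widetilde G_N$. Everything else is formal once the standard properties of the Greenberg functor are in hand.
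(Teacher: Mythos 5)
Your proposal is correct in substance and rests on the same underlying mechanism as the paper -- the identification $W_n(\FF_p)\cong \ZZ/p^{n}$ and the fact that ring operations in Witt coordinates are given by universal integral polynomials -- but it packages that mechanism quite differently. The paper works entirely explicitly: it writes an element of $A_n(\ZZ,p)$ as $\sum p^{i-1}a_iT^i$ with each $a_i$ replaced by a (partially truncated) Witt vector, expands the composition law using the Witt addition and multiplication polynomials, and declares the resulting polynomial maps on $\AA^N_{\FF_p}$ (with $N=n+\tfrac{n(n+1)}{2}$) to be the group law; invertibility is handled intrinsically, by requiring the polynomial to be affine linear mod $p$ and adjoining a variable $y$ with $a_{10}y=1$ to invert the linear coefficient. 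You instead invoke the Greenberg functor abstractly via $\Gr_n(X)(A)=X(W_n(A))$ and its preservation of products and closed immersions, and you sidestep the characterization of units and nilpotents entirely by carrying the pair $(f,f^{-1})$ subject to the closed relations $f\circ g=g\circ f=T$. Your pair trick buys a visibly closed, visibly functorial group object with no Rabinowitsch localization and no analysis of which coefficients must be nilpotent; the paper's approach buys an explicit coordinate description (affine space, or affine space times a $\mathbb{G}_m$-type factor). Your ind-scheme formulation $\varinjlim_N\widetilde G_N$ of the second bullet is also arguably more faithful than the paper's single infinite-dimensional affine scheme, since it correctly records that automorphisms are polynomials of finite degree rather than power series.

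Two caveats. First, as you yourself flag at the end, the divisibility conditions ``$p^{i-1}\mid a_i$'' cutting $A_n(\ZZ,p)$ out of $\widetilde G_N$ are \emph{not} closed conditions on $\AA^1$ over $\ZZ/p^n$ (the image of multiplication by $p^{i-1}$ is not a closed subscheme); they become closed only after passing to Witt coordinates, where they read $a_{i,0}=\dots=a_{i,i-2}=0$. So $H$ must be defined as a closed subscheme of $\Gr_n(\widetilde G_N)$ over $\FF_p$, not of $\widetilde G_N$ over $\ZZ/p^n$ as your text currently has it; the argument survives this reordering unchanged. Second, the parenthetical ``(in fact finite)'' is wrong: the resulting group scheme is finite dimensional but of positive dimension (for $A_2(\ZZ,p)$ the paper's Example exhibits it as $\AA^5_{\FF_p}$ with a twisted multiplication), even though its group of $\FF_p$-points is finite.
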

These groups can in some sense be considered as ``the Greenberg transform'' of $A_n(\ZZ,p)$ and $\Aut(\AA^1_{\ZZ/p^n})$ respectively.

 \begin{thm}
For $R$ a $q$-torsion free ring where $q\in R$ and $qR\in \Spec(R)$ the groups $A_n(R,q)$ and $\Aut(\AA^1_{R_n})$ are solvable. 
\end{thm}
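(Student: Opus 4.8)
The plan is to produce an explicit normal series on $G := \Aut(\AA^1_{R_n})$ whose successive factors are abelian (or metabelian), and then to deduce solvability of $A_n(R,q)$ as a free corollary. For $0 \le i \le n+1$ I would set $G^{(0)} := G$ and, for $i \ge 1$,
\[ G^{(i)} := \ker\big( G \To \Aut(\AA^1_{R_{i-1}}) \big), \]
the kernel of reduction of coefficients modulo $q^i$; concretely $G^{(i)} = \{\psi : \psi \equiv T \pmod{q^i}\}$, with $G^{(n+1)} = \{T\}$. Since reduction along the ring map $R_n \To R_{i-1}$ is a functorial group homomorphism, every $G^{(i)}$ is normal in $G$, and these fit into a descending normal series
\[ G = G^{(0)} \supseteq G^{(1)} \supseteq \cdots \supseteq G^{(n+1)} = \{T\}. \]
It then suffices to show each factor $G^{(i)}/G^{(i+1)}$ is solvable.

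For the top factor, the hypothesis $qR \in \Spec(R)$ makes $R_0 = R/qR$ a domain, so by the fact recalled in the introduction $\Aut(\AA^1_{R_0})$ is exactly the affine group $\{aT+b : a \in R_0^\times,\ b \in R_0\}$. This group is metabelian --- the translations form an abelian normal subgroup with abelian quotient $R_0^\times$ --- hence solvable, and $G/G^{(1)}$ embeds into it, so it too is solvable.

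For $1 \le i \le n$ I would linearize composition to see each factor is abelian. Writing $\psi = T + q^i u(T)$, the $q$-torsion freeness of $R$ guarantees that $u$ is well defined modulo $q^{n+1-i}$, so its reduction $\bar u \in R_0[T]$ makes sense. The key computation is that for $\psi = T + q^i u$ and $\phi = T + q^i v$,
\[ \psi\circ\phi = T + q^i v(T) + q^i u\big(T + q^i v(T)\big) \equiv T + q^i\big(u(T)+v(T)\big) \pmod{q^{i+1}}, \]
the correction terms vanishing because they carry a factor $q^{2i}$ and $2i \ge i+1$. Hence $\psi \mapsto \bar u$ is a homomorphism $G^{(i)} \to (R_0[T],+)$ with kernel precisely $G^{(i+1)}$, so $G^{(i)}/G^{(i+1)}$ injects into an abelian group. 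Combining all the factors shows $G$ is solvable; and since $A_n(R,q)$ is a subgroup of the solvable group $\Aut(\AA^1_{R_{n-1}})$ (Example \ref{prop:A2_is_a_subgroup}, cf.\ \cite{Dupuy2013}), it is solvable as well.

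The step I expect to be the real work is the analysis of the congruence factors: checking via $q$-torsion freeness that $\bar u$ is well defined modulo the correct power of $q$, and confirming that the degree growth permitted in the higher $G^{(i)}$ does not disturb the additivity computation --- it does not, since we only ever retain coefficients modulo $q$. Granting these, the result follows from the two standard facts that a group with a normal series having solvable factors is solvable and that subgroups of solvable groups are solvable.
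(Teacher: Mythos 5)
Your proof is correct, but it organizes the induction differently from the paper. The paper's argument for $\Aut(\AA^1_{R_n})$ proceeds by a ``halving'' recursion: it first proves (Lemma \ref{lem: commuting}) that two automorphisms congruent to $T$ modulo $q^r$ and $q^s$ commute whenever $r+s\geq n$, deduces that the single kernel $K_{n,r}(R,q)$ is an abelian \emph{subgroup} for $r>n/2$, and then handles the quotient $\Aut(\AA^1_{R_{r-1}})$ by recursing on the exponent; separately, it proves solvability of $A_d(R,q)$ by induction on $d$ using the exact sequence $N_d(R,q)\to A_d(R,q)\to A_{d-1}(R,q)\to 1$ with $N_d(R,q)\cong R_0^{d+1}$. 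You instead take the full congruence filtration $G^{(i)}=\ker(G\to\Aut(\AA^1_{R_{i-1}}))$ one step at a time, show each factor $G^{(i)}/G^{(i+1)}$ is abelian by exhibiting the homomorphism $T+q^iu\mapsto\bar u$ into $(R_0[T],+)$, anchor the series on the metabelian affine group over the domain $R_0$, and then get $A_n(R,q)$ for free as a subgroup of a solvable group. The underlying computation is the same first-order expansion $u(T+q^iv)\equiv u(T)$ modulo higher powers of $q$, but the bookkeeping differs: the paper's larger abelian kernels yield a shorter derived series (length roughly $\log_2 n$ rather than $n$), while your step-by-step filtration identifies every factor concretely as a subgroup of an additive polynomial group, dispenses with the separate commuting lemma, and subsumes the $A_n(R,q)$ case without a second induction. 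Both arguments are valid; yours is arguably the cleaner route to the qualitative statement, the paper's gives sharper quantitative control.
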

In particular with implies that for every $m\in \ZZ$ the groups $\Aut(\AA^1_{\ZZ/m})$ are solvable.

\subsection{Motivation}
We will take a moment to motivate these groups. 
Let $B$ be a ring. 
Recall that an \textbf{$\AA^1$-bundle} over a scheme $X/B$ is a scheme $E/B$ together with a morphisms $\pi: E \to X$ with the property that for every point $x\in X$ there exists an affine open subset $U$ containing $x$ and an isomorphisms $\psi: \pi^{-1}(U) \cong U \times_B \AA^1_B$ with the property that $\pi\vert_{\pi^{-1}(U)} = p_1 \circ \psi$ where $p_1: U\times \AA^1 \to \AA^1$ is the first projection onto $U$. 
Given two such isomorphisms $\psi,\psi': \pi^{-1}(U) \cong U\times_B \AA^1_B$ we can consider the map
 $$ \psi' \circ \psi^{-1}:U\times_B \AA^1_B \to U \times_B\AA^1_B.$$
Maps of this form are essentailly the subject of this paper when $B$ is a non-reduced ring. These appear in the author's paper \cite{Dupuy2013} when $E$ is the first $p$-jet spaces of a curve modulo $p^n$.

\subsection{Plan of the paper}
In section \ref{groups} we introduce the groups and notation that we are going to use. 
In particular subsection \ref{sec:biggroup} proves theorem \ref{thm: big}.

In section \ref{greenberg} we prove theorem \ref{thm:greenberg} which shows that univariate polynomials automorphisms over $\ZZ/p^n$ are points of algebraic groups. This section starts by proving this in a simple example and moves to the more general case.

In section \ref{solvable} we prove solvability of univariate polynomial automorphisms by proving certain normal subgroups are abelian. This section starts by proving the theorem in a simple case then proves the more general case.

We apply the results developed in section \ref{solvable} in section \ref{adjoint}. In subsection \ref{inverses} we given an algorithm for computing inverse. 
The remainder of the subsection gives module structures to the abelian subgroups introduced in section \ref{solvable} and gives examples of explicit representations for the adjoint actions of on these abelian subgroups.

\subsection{Acknowledgements}
We would like to thank Alexandru Buium and Danny Krashen for their useful comments and suggestions.

\section{Automorphisms of the affine line over non-reduced rings}\label{groups}

We introduced the groups $\At_d(2,R,q)$ (denoted $\At_d$ in the author's thesis) of polynomial automorphisms modulo $p^2$ with degree bounded by $d$ and proved that they form a subgroup. This section aims to generalize the result groups $\At_d(2,R,q)$. 

\subsection{Subgroups of bounded degree}\label{sec:biggroup}

The main aim of this section is to prove the following result:
\begin{thm}\label{biggroup}
For $R$ a $q$-torsion free ring with $q\in R$ and $qR\in \Spec(R)$, the collection 
\begin{equation}
\At_d(n,R,q) = \lbrace \psi \in \Aut(\AA^1_{R_{n-1}}): \deg(\psi \mod p^m ) \leq 2^{m-2} d, \ \ \ 2\leq m \leq n\rbrace
\end{equation}
is a subgroup.
\end{thm}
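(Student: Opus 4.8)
The plan is to verify the subgroup axioms, the real content being closure under composition and under inversion (the identity $T$ satisfies every bound once $d\ge 1$). The organizing idea is to convert the degree conditions into $q$-adic valuation bounds on coefficients. Writing $\psi=\sum_k a_k T^k\in R_{n-1}[T]$, the condition $\deg(\psi\bmod q^m)\le 2^{m-2}d$ for $2\le m\le n$ is equivalent to $\ord_q(a_k)\ge v(k)$ for every $k$, where $v(k)=\max\{\,m\ge 2:2^{m-2}d<k\,\}$, with $v(k)=0$ when this set is empty (i.e.\ $k\le d$). The crucial extra input is the hypothesis $qR\in\Spec(R)$: it makes $R/q$ a domain, so $\psi\bmod q$ is affine, whence every coefficient $a_k$ with $k\ge 2$ is already divisible by $q$. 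I therefore replace $v$ by $\max(v,1)$ on $\{k\ge 2\}$; this one extra unit of valuation per nonlinear monomial is exactly what drives the estimates below.

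First I would isolate the combinatorial heart: for all integers $i\ge 1$ and $j_1,\dots,j_i\ge 0$ with $k=\sum_l j_l$,
\[
v(i)+\sum_{l=1}^{i}v(j_l)\ \ge\ v(k).
\]
Since $v$ is non-decreasing and $v(0)=0$, discarding the zero parts only lowers the part-count and leaves $\sum_l v(j_l)$ unchanged, so it suffices to treat $j_l\ge 1$. I would prove this in contrapositive form: if the left side is $\le B$, then $k\le 2^{B-1}d$, i.e.\ $v(k)\le B$. Splitting the parts by whether $v(j_l)=0$ (forcing $j_l=1$) or $v(j_l)\ge 1$ (giving $j_l\le 2^{v(j_l)-1}d$), bounding the number of parts via $i\le 2^{v(i)-1}d$, and using the elementary fact $\sum_l 2^{c_l-1}\le 2^{(\sum_l c_l)-1}$ for positive integers $c_l$, reduces everything to maximizing $2^{a-1}+2^{C-1}$ subject to $a+C\le B$; convexity puts the maximum at the boundary, where $2^{B-2}+1\le 2^{B-1}$ closes the estimate.

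With this lemma, closure under composition is immediate: the coefficient of $T^k$ in $\psi\circ\phi=\sum_i a_i\phi^i$, with $\phi=\sum_j b_jT^j$, is a sum of terms $a_i\,b_{j_1}\cdots b_{j_i}$ with $\sum_l j_l=k$, each of valuation $\ge v(i)+\sum_l v(j_l)\ge v(k)$; hence $\ord_q$ of the $T^k$-coefficient is $\ge v(k)$, and the degree bounds pass to $\psi\circ\phi\in\Aut(\AA^1_{R_{n-1}})$. For inversion I would show, by strong induction on $k$, that $\chi=\psi^{-1}$ satisfies $\ord_q(\chi_k)\ge v(k)$. Comparing coefficients of $T^k$ ($k\ge 2$) in $\psi(\chi)=T$ and separating the contributions in which some part equals $k$ (the rest being $0$) from those with all parts $<k$, the coefficient multiplying $\chi_k$ assembles into $\psi'(\chi_0)$, a unit because $\psi'\equiv a_1\bmod q$; the remaining, all-parts-$<k$ terms are controlled by the induction hypothesis and the lemma, giving $\ord_q(\chi_k)\ge v(k)$.

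The main obstacle is the combinatorial inequality, and in particular the realization that it is \emph{false} without the automorphism hypothesis: if one allowed $v(2)=0$ (as an arbitrary coefficient pattern would), the pair $k=4=2+2$ already violates it, and it is precisely the primality of $qR$ — forcing mod-$q$ automorphisms to be affine, hence $v\ge 1$ on every nonlinear monomial — that rescues the bound. A secondary technical point is the unit/nilpotent bookkeeping in the inversion step, which rests on $a_1$ being a unit and on all higher coefficients being divisible by $q$; I expect this to be routine once the valuation lemma is in hand.
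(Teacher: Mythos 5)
Your proof is correct, but it takes a genuinely different route from the paper's. The paper argues by induction on $n$: it writes $\psi = a_0 + a_1T + qf_1 + \cdots + q^{n-1}f_{n-1}$, Taylor-expands each $q^if_i(\psit)$ about the affine part $\at_0+\at_1T$, bounds the degree of each resulting term $q^{i+j}\frac{f_i^{(j)}(\ft_0)}{j!}A^j$ separately (Lemma \ref{estimates}), and closes the estimate with the inequality $2^{-j}+j2^{-j-2}\le 1$; closure under inversion is only treated explicitly in the base case $n=2$, via the observation that a polynomial and its inverse must have equal degree. You instead collapse all the degree conditions into a single valuation profile $\ord_q(a_k)\ge v(k)$ on coefficients, prove the superadditivity inequality $w(i)+\sum_l w(j_l)\ge w(\sum_l j_l)$ for the corrected profile $w=\max(v,1)$ on nonlinear indices, and deduce both composition and inversion closure by direct coefficient bookkeeping, the latter by recursively solving $\psi(\chi)=T$ for $\chi_k$ against the unit $\psi'(\chi_0)$. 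Your approach has two advantages: it isolates exactly where the hypothesis $qR\in\Spec(R)$ enters (automorphisms are affine mod $q$, supplying the one extra unit of valuation without which your $k=4=2+2$ example breaks the inequality), and it gives a complete, uniform treatment of inverses, which the paper's inductive step leaves implicit. The paper's decomposition, on the other hand, follows the $q$-adic filtration that reappears in its solvability analysis, so the two arguments are organized around different structures; both establish the theorem.
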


Before proving this result in general we give several examples of the proof in special cases.

\begin{ex}\label{prop:A2_is_a_subgroup} 
We will show that for each $d\geq 1$, $\At_d(2,R,q)$ is a group. 

Let $\psi(T) = a_0 + a_1T+ q f(T)$ and $\psit(T) = \at_0 + \at_1T+q\ft(T)$ with $\ord_T(f),\ord_T(\ft)\geq 2$ we get
 \begin{eqnarray}
  \psi(\psit(T)) &\equiv& a_0 + a_1[ \at_0 + \at_1T + q \ft(T)] + qf( \at + \at T) \nonumber\\
  &=&
  a_0 + a_1\at_0 + (a_1\at_1)T \nonumber \\
   &&\ \ \ \ \ \ \ \ \ \ \ \ \ \ \ \ + q(a_1\ft(T) + f(\at_0+\at_1T)) \label{eqn:top_term_mod_p_squared} 
 \end{eqnarray}
Note that $a_1 \ft(T) + f(\at_0+\at_1T)$ has degree no larger than $\max\{ \deg f, \deg \ft \}$. 
Also, since $a_1$ and $\at_1$ are units, the degree of $\psi\circ \psit$ is exactly $\max\lbrace \deg \psi,\deg \psit \rbrace$ in the case that $\deg \psi \neq \deg \psit$. This means that if $\psi$ and $\psit$ are inverse to each other then $\deg \psi = \deg \psit$ and hence $\At_d(2,R,q)$ is closed under inverses. This is enough information to show that $\psi\circ \psit^{-1} \in \At_d(2,R,q)$ which shows $\At_d(2,R,q)$ is a subgroup. \qed
\end{ex}

\begin{ex}\label{mod p3}
We will show that the set of $\psi \in \Aut(\AA^1_{R_2}) $ satisfying
\begin{eqnarray*}
 \deg (\psi \mod q^2) \leq d \\
 \deg (\psi \mod q^3) \leq 2 d 
\end{eqnarray*}
form a subgroup. If we write
 \begin{eqnarray*}
  \psi(T) = a_0 + a_1 T + q f(T) + q^2 g(T) \mod q^3
 \end{eqnarray*}
for $f(T) \in R_1[T], g(T)\in R_0[T]$ with $\ord_T(f)\geq 2$ and $\ord_T(g)\geq 3$. 
Note that we have
\begin{eqnarray*}
\deg(\psi \mod q^2) &\geq& \deg( f\mod q),\\
\deg(\psi \mod q^3) &\geq& \deg( g \mod q) , \deg( f \mod q^2),
\end{eqnarray*}
Composing $\psi$ with $\psit$ gives
 \begin{eqnarray*}
  \psi(\psit(T)) &=& a_0 + a_1 \psit(T) \\
  && + q[ f(\at_0 + \at_1 T) + q f'(\at_0 + \at_1 T)\ft(T)  ]\\
  && + q^2g(\at_0 + \at_1 T)
 \end{eqnarray*}
Since the invertible polynomials of degree less that $d$ are a group modulo $q^2$ we only need to check that $\deg(\psi(\psit(T)))\leq 2d$.
We can just check each term is bounded by $2d$,
\begin{itemize}
 \item The degree of $f(\at_0+\at_1 T) \mod q^2 $ is bounded by $2d$,
 \item The degree of $f'(\at_0 + \at_1 T)\ft(T) \mod q$ is bounded by $(d-1)+d$,
 \item The degree of $g(\at_0+\at_1 T) \mod q$ is bounded by $2d$,
\end{itemize}
which completes the proof. \qed

\end{ex}

\begin{ex}
We will show that the set of $\psi \in \Aut(\AA^1_{R_3}) $ satisfying
\begin{eqnarray*}
 \deg (\psi \mod q^2) \leq d \\
 \deg (\psi \mod q^3) \leq 2 d \\
 \deg (\psi \mod q^4) \leq 4 d 
\end{eqnarray*}
form a subgroup. The proof relies on the previous two cases. 
We will write such a $\psi$ as 
 \begin{eqnarray*}
  \psi(T) = a_0 + a_1 T + q f(T) + q^2 g(T) + q^3 h(T) \mod q^4
 \end{eqnarray*}
for $f(T) \in R_2[T], g(T)\in R_1[T],h(T)\in R_0[T]$ where $\ord_T f \geq 2$, $\ord_T g\geq 3$ and $\ord_T h \geq 4$. 
Note that our conditions on degree imply that 
\begin{eqnarray*}
\deg(\psi \mod q^2) &\geq& \deg( f\mod q),\\
\deg(\psi \mod q^3) &\geq& \deg( g \mod q) , \deg( f \mod q^2), \\
\deg(\psi \mod q^4) &\geq& \deg( h \mod q), \deg( g \mod q^2), \deg(f \mod q^3).
\end{eqnarray*}
We will make use of these bounds in the subsequent computations. 
Composing $\psi$ with $\psit$ gives
 \begin{eqnarray*}
  \psi(\psit(T)) &=& a_0 + a_1 \psit(T) \\
  && + q[ f(\at_0 + \at_1 T) + q f'(\at_0 + \at_1 T)( \ft(T) + q \gt(T) ) + q^2 \frac{f''(\at_0+\at_1 T)}{2} \ft(T)^2]\\
  && + q^2[g(\at_0 + \at_1 T) + q g'(\at_0 + \at_1 T) \ft(T)  ]\\
  && + q^3 h(\at_0+\at_1T).
 \end{eqnarray*}
From example \ref{mod p3} we just need to show that each term in this polynomial is bounded by $4d$.
\begin{itemize}
 \item The degree of $f(\at_0+\at_1 T) \mod q^3$ is bounded by $4d$,
 \item The degree of $f'(\at_0 + \at_1 T) \ft(T) \mod q^2$ is bounded by $(2d-1)+(2d)$
 \item The degree of $f'(\at_0 + \at_1 T) \gt(T) \mod q$ is bounded by $(d-1)+(2d)$
 \item The degree of $f''(\at_0+\at_1 T)\ft(T)^2 \mod q$ is bounded by $(d-2)+ 2d$
 \item The degree of $g(\at_0 + \at_1 T) \mod q^2 $ is bounded by $4d$,
 \item The degree of $g'(\at_0 + \at_1 T) \ft(T) \mod q $ is bounded by $(2d-1)+d$,
 \item The degree of $h(\at_0+\at_1T)$ is bounded by $4d$.
\end{itemize}
This shows that $\psi(\psit(T))$ has degree less than $4d$.  \qed
\end{ex}

We will now give a theorem which generalizes the examples above.

\begin{proof}[Proof of Theorem \ref{biggroup}]
 We prove this by induction on $n$ where the statement is that the set $\At_d(n,R,q)$ is a group. 
 The base case is proved since $\At_d(2,R,q)$ is a subgroup.
 We will suppose that $\At_d(n-1,R,q)$ is a subgroup. 
 Let $\psi(T),\psit(T) \in \At_d(n,R,q)$. 
 We just need to show that $\psi \circ \psit\in \At_d(n,R,q)$, i.e. that the degree of $\psi\circ \psit$ is bounded by $2^{n-2}d$.
 
 We will write $\psi$ as
 $$ \psi(T) = a_0 + a_1 T + qf_1(T) + q^2 f_2(T) + \cdots + q^{n-1} f_{n-1}(T) \mod q^n $$
 where $f_i$ is defined $q^{n-i}$ and $\ord_T f_i \geq i+1$.
 Note that we have 
 $$ d2^{m-2}\geq \deg( \psi(T) \mod q^m ) \geq \deg( f_i(T) \mod q^{m-i} ) $$ 
For $2 \leq m \leq n-1$ and $2 \leq i\leq m-1$.
We will now examine each of the terms of $\psi \circ \psit$. 
Here we have terms 
\begin{eqnarray*}
 \psi(\psit) &=& f_0(\psi) + q f_1(\psi) + \cdots + q^{n-1}f_{n-1}(\psit) \mod q^n.
\end{eqnarray*}
The general term is 
\begin{eqnarray}
 q^i f_i(\psit) &=& q^i f_i(\ft_0 + q \ft_1 + q^2 \ft_2 + \cdots + q^{n-i-1} \ft_{n-i-1}) \mod q^n \nonumber \\
 &=& q^i[ f_i(\ft_0) + f_i'(\ft_0) A + \cdots + \frac{f_i^{(n-i-1)}(\ft_0)}{(n-i-1)!}A^{n-i-1}] \label{terms}
\end{eqnarray}
where
$$ A = \ft_1 + q \ft_2 + q^2 \ft_3 + \cdots + q^{n-i-2}\ft_{n-i-1}. $$
\begin{lem}\label{estimates}
 For $i=1,\ldots,n$ the $j$th term in \ref{terms} is $q^{i+j}\frac{f_i^{(j)}(\ft_0)}{j!}A^j$ and $0\leq j \leq n-i-1$. 
 The degree of this term is bounded by 
  $$ d_{n-j-1} - j + jd_{n-i-j} $$
  where $\deg(\psi \mod q^n),\deg(\psit \mod q^n) \leq d_{n-1}$ for each $n\leq m$. (In our application $d_j=2^{j-2}d$.)
\end{lem}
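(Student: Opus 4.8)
The plan is to read off the $T$-degree of the displayed $j$th term factor by factor, using that it has already been put in the form $q^{i+j}\frac{f_i^{(j)}(\ft_0)}{j!}A^j$ by Taylor-expanding $f_i$ around the linear part $\ft_0 = \at_0 + \at_1 T$ of $\psit$. First I would observe that $\frac{f_i^{(j)}}{j!}$ is the $j$th Hasse derivative of $f_i$, whose coefficients lie in $R$ (its leading coefficient is $\binom{\deg f_i}{j}$ times that of $f_i$), so the expression is legitimate over $R_{n-1}$ and no honest division by $j!$ takes place. Since the term carries the prefactor $q^{i+j}$ and everything is computed modulo $q^n$, it suffices to bound the $T$-degree of $\frac{f_i^{(j)}(\ft_0)}{j!}A^j$ modulo $q^{n-i-j}$. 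The entire estimate will be a chain of upper bounds built from the two always-valid inequalities $\deg(gh \mod q^s) \le \deg(g \mod q^s) + \deg(h \mod q^s)$ and $\deg(g^j \mod q^s) \le j\,\deg(g \mod q^s)$; because I only ever need upper bounds, I never have to argue that a leading coefficient survives reduction modulo $q$, which removes the one thing that could otherwise go wrong.

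For the first factor, $\ft_0 = \at_0 + \at_1 T$ is linear with unit leading coefficient $\at_1$, so precomposition with it preserves $T$-degree, while differentiating $j$ times lowers the degree by at most $j$. Hence $\deg\bigl(\tfrac{f_i^{(j)}(\ft_0)}{j!} \mod q^{n-i-j}\bigr) \le \deg(f_i \mod q^{n-i-j}) - j$. I then invoke the degree bound recorded just above the lemma, $\deg(f_i \mod q^{m-i}) \le d_{m-1}$, at the value $m = n-j$ (so that $m-i = n-i-j$); this gives $\deg(f_i \mod q^{n-i-j}) \le d_{n-j-1}$, and therefore the first factor contributes at most $d_{n-j-1} - j$.

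For the power $A^j$, I write $A = \ft_1 + q\ft_2 + \cdots + q^{n-i-2}\ft_{n-i-1}$, so that modulo $q^{n-i-j}$ the summand $q^{k-1}\ft_k$ is read as $\ft_k$ modulo $q^{\,n-i-j-k+1}$. Applying the same recorded bound $\deg(\ft_k \mod q^{m-k}) \le d_{m-1}$, now at $m = n-i-j+1$ (so that $m-k = n-i-j-k+1$), bounds every such summand by $d_{n-i-j}$ uniformly in $k$. Consequently $\deg(A \mod q^{n-i-j}) \le d_{n-i-j}$, and so $\deg(A^j \mod q^{n-i-j}) \le j\,d_{n-i-j}$.

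Adding the two contributions by subadditivity of degree under multiplication yields exactly the claimed bound $d_{n-j-1} - j + j\,d_{n-i-j}$ for the $T$-degree of the $j$th term. The only genuine work here is bookkeeping: matching each factor to the correct truncation level ($q^{n-i-j}$ for the derivative factor, $q^{\,n-i-j-k+1}$ for the $k$th piece of $A$) and selecting the right instance $m = n-j$ or $m = n-i-j+1$ of the recorded degree inequality. I expect this indexing, rather than any substantive inequality, to be the main place an error could creep in, so I would cross-check the two substitutions against the low-$n$ instances already computed in Example \ref{mod p3} and the example that follows it.
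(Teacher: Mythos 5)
Your proof is correct and follows essentially the same route as the paper's: bound the factor $f_i^{(j)}(\ft_0)$ by identifying $q^{i+j}f_i$ with a term of $\psi \mod q^{n-j}$ (giving $d_{n-j-1}-j$ after differentiation), and bound $A^j$ by identifying $q^{i+j}A$ with a term of $\psit \mod q^{n-i-j+1}$ (giving $j\,d_{n-i-j}$), then add. Your added remarks on the Hasse derivative being integral and on precomposition with $\at_0+\at_1T$ preserving degree are details the paper leaves implicit, but the argument is the same.
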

\begin{proof}
First, 
  $$q^{i+j} A = q^{i+j}( \ft_1 + q \ft_2 + \cdots + q^{n-i-2} \ft_{n-i-1}$$
Since this is a term in $q^{i+j-1} \psit \mod q^n$ and corresponds to a term in $\psit \mod q^{n-i-j+1}$ which has degree bounded by $d_{n-i-j}$. 
Since $A$ appears in the with multiplicity $j$ in $q^{i+j}\frac{f_i^{(j)}(\ft_0)}{j!}A^j$ it contributes $j\cdot d_{n-i-j}$ to the degree bound of $q^{i+j}\frac{f_i^{(j)}(\ft_0)}{j!}A^j$.

Now we look at $q^{i+j}f_i^{(j)}(\ft_0)$. The expression $q^{i+j}f_i$ is a term in $q^j\psi \mod q^n$ which corresponds to a term of $\psi \mod q^{n-j}$ and hence has a degree bounded by $d_{n-j-1}$. Since we are taking $j$ derivatives in our expression it has a contribution of $d_{n-j-1}-j$. 

Putting the information from the two factors of $q^{i+j}\frac{f_i^{(j)}(\ft_0)}{j!}A^j$ together we have an overall degree bound 
$$ d_{n-j-1} -j + j d_{n-i-j}$$
as advertised.
\end{proof}
All the terms in expression \ref{terms} are of the form $q^{i+j}\frac{f_i^{(j)}(\ft_0)}{j!}A^j$ where $i$ varies from $0$ to $n-1$ and $j$ varies from $0$ to $n-i-1$. 
We will now use the estimates in Lemma \ref{estimates} to finish our proof. 
Again, we suppose that $d_j = 2^{j-1}d$. Plugging this into our expression we have
\begin{eqnarray*}
 2^{n-j-2}d-j + j 2^{n-i-j-1}d &\leq& 2^{n-j-2}d + j 2^{n-i-j-1}d  \\
 &\leq& 2^{n-j-2}d + j 2^{n-j-1}d \\
 &=& 2^{n-2}d \left( 2^{-j} + j 2^{-j-2} \right) 
\end{eqnarray*}
and since $2^{-j} + j 2^{-j-2}\leq 1$ for all $j\geq 0$ we have our desired bound. 
\end{proof}

\begin{ex}
 The polynomial 
 $$\psi(T) = T + q T^d + q^2 T^{2d} + q^3 T^{4d} + q^4 T^{8d} + q^5 T^{16d} \mod q^6 \in \At_d(6,\ZZ,p).$$
 In particular has finite order under composition. 
\end{ex}

\begin{cor}
 Let $\psi \in \Aut(\AA^1_{\ZZ/q^n})$ and let $\psi^r(T) = \psi(\psi(\cdots (\psi(T))\cdot ) )$ where the composition occurs $r$ times. For every $r\geq 1$ we have 
   $$ \deg(\psi^r(T)) \leq 2^{n-2}( \deg(\psi \mod q^2) ).$$
\end{cor}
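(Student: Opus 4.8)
The plan is to derive the corollary from Theorem~\ref{biggroup} by locating $\psi$ inside a single bounded-degree subgroup and then using closure under composition. Set $d := \deg(\psi \bmod q^2)$. By Theorem~\ref{biggroup} the collection $\At_d(n,\ZZ,q)$ is a subgroup of $\Aut(\AA^1_{\ZZ/q^n})$, hence closed under composition; so the moment we know $\psi \in \At_d(n,\ZZ,q)$ we get $\psi^r \in \At_d(n,\ZZ,q)$ for every $r \geq 1$, and the defining inequality of $\At_d$ at the top level $m=n$ reads
\[
 \deg(\psi^r) \;=\; \deg(\psi^r \bmod q^n) \;\leq\; 2^{\,n-2} d \;=\; 2^{\,n-2}\,\deg(\psi \bmod q^2).
\]
This is precisely the claimed bound, so the entire corollary reduces to a single membership statement.

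That membership statement is: every $\psi \in \Aut(\AA^1_{\ZZ/q^n})$ satisfies $\deg(\psi \bmod q^m) \leq 2^{\,m-2}\deg(\psi \bmod q^2)$ for $2 \leq m \leq n$. I would prove it by induction on $m$. The base case $m=2$ is the definition of $d$, holding with equality. For the inductive step, assume $\deg(\psi \bmod q^{m-1}) \leq 2^{\,m-3} d$ and write $\psi \equiv (\psi \bmod q^{m-1}) + q^{m-1} h \pmod{q^m}$, where $h \in \FF_q[T]$ is the newly appearing top layer, so the target becomes $\deg h \leq 2^{\,m-2} d$. The leverage is invertibility: reducing the identity $\psi \circ \psi^{-1} \equiv T \pmod{q^m}$ modulo $q^{m-1}$ recovers the inverse of $\psi \bmod q^{m-1}$, already degree-controlled by the inductive hypothesis, while the coefficient of $q^{m-1}$ in that identity expresses $h$ through the lower layers of $\psi$ and $\psi^{-1}$ composed together. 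These are exactly the terms $q^{i+j}\,\tfrac{f_i^{(j)}(\ft_0)}{j!}A^j$ catalogued in Lemma~\ref{estimates}, so the degree bookkeeping there would be reused to show that nothing feeding into $h$ can exceed twice the level-$(m-1)$ degree, giving $\deg h \leq 2\cdot 2^{\,m-3}d = 2^{\,m-2}d$.

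The main obstacle is this inductive step, and it is where essentially all of the difficulty lives: it is the only point at which one must genuinely exploit that $\psi$ is an automorphism and not merely a polynomial, and the sharp constant $\deg(\psi \bmod q^2)$ stands or falls on whether invertibility alone forces the degree of the new layer $h$ to double rather than grow freely. I would therefore concentrate the proof on isolating which monomials of $\psi^{-1} \bmod q^{m-1}$ can contribute to the $q^{m-1}$-coefficient of $\psi \circ \psi^{-1}$ and checking that each is controlled by the level-$(m-1)$ bound; making this cancellation tight enough to yield the factor-of-two growth, uniformly in $m$, is the step I expect to be hardest and the one on which the sharpness of the stated constant ultimately rests.
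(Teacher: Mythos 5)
Your first paragraph is exactly the paper's strategy: place $\psi$ in a single group $\At_d(n,R,q)$ and invoke closure under composition from Theorem~\ref{biggroup}, so that the top-level inequality $\deg(\psi^r \bmod q^n) \leq 2^{n-2}d$ follows for all $r$. But the membership lemma on which you hang everything --- that \emph{every} $\psi \in \Aut(\AA^1_{\ZZ/q^n})$ satisfies $\deg(\psi \bmod q^m) \leq 2^{m-2}\deg(\psi \bmod q^2)$ --- is false, and your inductive step cannot be repaired, because invertibility places no constraint at all on a newly appearing $q$-layer. For any polynomial $h$, the map $\psi(T) = T + q^{m-1}h(T)$ is an automorphism modulo $q^m$ with inverse $T - q^{m-1}h(T)$: the error terms carry $q^{2(m-1)}$ and $2(m-1) \geq m$ for $m \geq 2$. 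Concretely, $\psi(T) = T + q^2 T^N \bmod q^3$ lies in $\Aut(\AA^1_{\ZZ/q^3})$ and has $\deg(\psi \bmod q^2) = 1$, yet $\deg\psi = N$ is arbitrary, so at $n=3$ the bound $2^{n-2}\cdot 1 = 2$ already fails at $r=1$. This also shows why your plan of extracting the $q^{m-1}$-coefficient of $\psi \circ \psi^{-1} \equiv T \bmod q^m$ is hopeless: that identity merely \emph{determines} the top layer of $\psi^{-1}$ as a function of the top layer $h$ of $\psi$ and lower data, and it is solvable for every $h$, so no bound on $\deg h$ can emerge. Lemma~\ref{estimates} cannot be reused here either --- it bounds compositions of polynomials already \emph{assumed} to satisfy the layered degree bounds; it does not generate such bounds from invertibility.

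What the paper actually does (and, given the counterexample, must do) is take $d = \deg(\psi)$, the full degree modulo $q^n$, rather than $\deg(\psi \bmod q^2)$. Then membership is trivial: $\deg(\psi \bmod q^m) \leq \deg(\psi) = d \leq 2^{m-2}d$ for all $2 \leq m \leq n$, so $\psi \in \At_d(n,R,q)$ and closure gives $\deg(\psi^r) \leq 2^{n-2}\deg(\psi)$. There is admittedly a mismatch in the paper itself: the corollary as printed has $\deg(\psi \bmod q^2)$, while its one-line proof (``take $\psi$ of degree $d$; it certainly has degree $d \bmod q^2$\dots'') silently conflates the full degree with the degree mod $q^2$; the statement is correct as it stands only with $2^{n-2}\deg(\psi)$, or with the printed constant under the additional hypothesis $\psi \in \At_{\deg(\psi \bmod q^2)}(n,R,q)$. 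So your reduction is the right one, but the sharper membership claim you tried to supply is not a gap that can be closed --- it is false, and the honest fix is to weaken the constant to the full degree of $\psi$.
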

\begin{proof}
 Take $\psi$ of degree $d$. 
 It certainly has degree $d \mod q^2$ satisfies $\deg(\psi \mod q^n) \leq d 2^{n-2}$ for each $n\geq 2$ so $\psi$ is in the subgroup $\At(d,R,q)$.
\end{proof}

\section{The ``Greenburg transform'' of univariate polynomial automorphisms}\label{greenberg}
Let $R$ be a $q$-torsion free ring where $q\in R$ and $qR \in \Spec(R)$. 
In \cite{Dupuy2013} (section 4.1) we introduced the groups $A_d(R,q) \subset \Aut(\AA^1_{R_{d-1}})$ consisting of polynomial automorphisms of the form 
$$ \psi(T) = a_0 + a_1 T + q a_1 T^2 + \cdots + q^{d-1} a_{d-1} T^d \ \ \ \mod q^d$$ 
and proved that they were a subgroup.

The aim of this section is to prove the following theorem:
\begin{thm}
 The groups $A_n(\ZZ,p)$ and $\Aut(\AA^1_{\ZZ/p^n})$ are isomorphic to group of $\FF_p$-points of an algebraic group.
\end{thm}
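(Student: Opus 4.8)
The plan is to realize these groups as the $\FF_p$-points of a Greenberg transform. Let $k$ range over perfect $\FF_p$-algebras and write $W(k)$ for the (full, hence $p$-torsion-free) Witt vectors, so that $W(k)/p^n = W_n(k)$ and $W(\FF_p)=\ZZ_p$ recovers $\ZZ_p/p^n = \ZZ/p^n$. I would introduce the group functor $\underline{A}_n$ by $\underline{A}_n(k) = A_n(W(k),p)$; since $W(k)$ is $p$-torsion-free the group property of Example \ref{prop:A2_is_a_subgroup} applies, and $\underline{A}_n(\FF_p)=A_n(\ZZ,p)$. The goal is to show $\underline{A}_n$ is representable by a finite-type $\FF_p$-scheme carrying a compatible group structure.

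A polynomial of the form \eqref{eqn: form of polynomial} is determined by coefficients $a_0,\dots,a_n$ in $W_n(k)$, and the factor $p^{i-1}$ on the $T^i$-term means $a_i$ matters only modulo $p^{n-i+1}$; expanding each coefficient into its Witt components over $k$ replaces $\psi$ by a finite tuple of elements of $k$ and exhibits $\underline{A}_n$ as a subfunctor of $\AA^N_{\FF_p}$ with $N = n + \tfrac{n(n+1)}{2}$. Invertibility amounts to $a_1$ being a unit in $W_n(k)$, i.e. to its leading Witt component being invertible in $k$, an open condition, so $\underline{A}_n$ is represented by an open subscheme of $\AA^N_{\FF_p}$. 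Composition of two such polynomials produces coefficients that are universal $\ZZ$-polynomial expressions in the inputs assembled from the ring operations of $W_n(k)$; by the universality of the Witt addition and multiplication polynomials these are given in Witt components by fixed polynomials over $\FF_p$, and the output again has the shape \eqref{eqn: form of polynomial} by the group property already cited. Representability together with Yoneda then forces multiplication, the unit and inversion to be morphisms of $\FF_p$-schemes, so $\underline{A}_n$ is a finite-dimensional algebraic group with $\underline{A}_n(\FF_p)=A_n(\ZZ,p)$, which is the first assertion; inversion can alternatively be exhibited directly via the inverse algorithm of Section \ref{inverses}.

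For $\Aut(\AA^1_{\ZZ/p^n})$ I would repeat the construction degree by degree. Each subgroup $\At_d(n,\ZZ,p)$ of Theorem \ref{biggroup} has Witt-component coordinates filling an open subscheme of a finite-dimensional affine space and is therefore a finite-dimensional algebraic group $\underline{\At}_d$ by the same argument. Because every automorphism of $\AA^1_{\ZZ/p^n}$ satisfies the finitely many degree inequalities cutting out $\At_d(n,\ZZ,p)$ once $d$ is large enough, we get $\Aut(\AA^1_{\ZZ/p^n}) = \bigcup_d \At_d(n,\ZZ,p)$ as a filtered union along closed immersions $\underline{\At}_d \hookrightarrow \underline{\At}_{d+1}$, whose colimit is the desired (infinite-dimensional) ind-group-scheme over $\FF_p$.

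The main obstacle is the polynomiality of composition in the Greenberg coordinates: the carries of $p$-adic arithmetic are exactly what could destroy polynomiality, and the real content of the step is that they are controlled by the universal Witt polynomials, reducing the verification to careful bookkeeping rather than a new idea. Particular care is needed with the truncation pattern — each $a_i$ living modulo $p^{n-i+1}$ — to confirm that it, and hence the finite-dimensional coordinate system, is preserved under composition.
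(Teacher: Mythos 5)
Your proposal follows essentially the same route as the paper: expand each coefficient of \eqref{eqn: form of polynomial} into Witt components so that composition becomes a system of universal polynomial maps over $\FF_p$ (the Greenberg transform), with $N = n + \tfrac{n(n+1)}{2}$ coordinates for $A_n(\ZZ,p)$ and infinitely many for $\Aut(\AA^1_{\ZZ/p^n})$. The only cosmetic differences are that you impose invertibility of $a_1$ as an open condition (the paper adjoins a symbol $y$ with $a_{10}y-1=0$) and you present the infinite-dimensional case as an ind-scheme via the filtration by the $\At_d$, which is a slightly more careful packaging of the same construction.
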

This follows from a Greenberg-like transform. 

\subsection{Witt vectors}
An excellent reference for Witt vectors is chapter one of \cite{Hazewinkel2009}. Let $R$ be a $p$-torsion free ring. The ring of \textbf{$p$-typical Witt vectors} of $R$, $W(R)$ is the set $R^{\NN}$ together with a \textbf{Witt addition} and \textbf{Witt multiplication} which define a ring structure:
\begin{eqnarray*} 
 [x_0,x_1,x_2,\ldots] +_W [y_0,y_1,y_2,\ldots] &=& [s_0,s_1,s_2,\ldots],\\
 {} [x_0,x_1,x_2,\ldots] *_W [y_0,y_1,y_2,\ldots] &=& [m_0,m_1,m_2,\ldots].
\end{eqnarray*}
Here $s_i,m_i \in \ZZ[x_0,x_1,\ldots,x_i,y_0,y_1,\ldots,y_i]$ the \textbf{Witt addition} and \textbf{Witt multiplication} polynomials. They are the unique polynomials so that for every ring $A$ the \textbf{Ghost Map}
\begin{eqnarray*}
 w:`W(A) &\to& A^{\NN}\\
{} [x_0,x_1,x_2,\ldots] &\mapsto& [w_0,w_1,w_2,\ldots]
\end{eqnarray*}
is a ring homomorphism. Here $w_j(x) = \sum_{n=0}^j p^j x_j^{p^{n-j}}$ are the \textbf{Witt polynomials}. Also, in the map $w$ above, we give $A^{\NN}$ its usual componentwise addition and multiplication. 

\begin{ex}
 To compute the first two Witt addition and multiplication polynomials one needs to solve the ``universal'' system of equations\footnote{ Universal meaning that we view everything just as symbols and don't worry about what ring or characteristic they are in.}
 \begin{eqnarray*}
  w_0(x+_W y) &=& w_0(x) + w_0(y)\\
  w_1(x+_W y) &=& w_1(x) + w_1(y)\\
    w_0(x*_W y) &=& w_0(x) w_0(y)\\
  w_1(x*_W y) &=& w_1(x)w_1(y)
 \end{eqnarray*}
 which amounts to the system
 \begin{eqnarray*}
  s_0 &=& x_0 + y_0\\
  s_0^p + p s_1 &=& (x_0^p + p x_1) + (y_0^p + p y_1)\\
  m_0 &=& x_0y_0\\
  m_0^p + p m_1 &=& (x_0^p +p x_1)(y_0^p + p y_1)
 \end{eqnarray*}
which has the solution
\begin{eqnarray*}
 s_0 &=& x_0 + y_0\\
 s_1 &=& x_1 + x_1 - \sum_{j=1}^{p-1} \frac{1}{p} {p \choose j} x^j y^{p-j}\\
 m_0 &=& x_0y_0\\
 m_1 &=& x_1 y_0^p + y_0^p y_1 + p x_1 y_1
\end{eqnarray*}
 \qed

\end{ex}
It is a theorem of Witt's that these can be solved to give integral sum and addition polynomials. 

If we make a ring out of the set $R^{n}$ by using only the first $n$ Witt addition and multiplication polynomials we get the \textbf{ring of truncated $p$-typical Witt vectors} $W_{n-1}(R)$. We will also make use of the following important property of Witt vectors
\begin{thm}[Witt]\label{thm: witt and mod}
   $W_n(\FF_p) \cong \ZZ/p^{n+1}$
\end{thm}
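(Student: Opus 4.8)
The plan is to realize $\ZZ/p^{n+1}$ through the unique ring homomorphism out of $\ZZ$ and to reduce everything to a single order computation. As a set $W_n(\FF_p)$ is $\FF_p^{\{0,1,\dots,n\}}$, so it has exactly $p^{n+1}$ elements; since $\ZZ$ is the initial object of $\Ring$, there is a unique ring map $\chi\colon \ZZ \to W_n(\FF_p)$, namely $m\mapsto m\cdot 1$ where $1=[1,0,\dots,0]$. The whole theorem then follows once I show that the additive order of $1$ in $W_n(\FF_p)$ is exactly $p^{n+1}$: the cyclic subgroup generated by $1$ would then have $p^{n+1}$ elements and hence be all of $W_n(\FF_p)$, forcing $\chi$ to be surjective with kernel the order ideal $p^{n+1}\ZZ$, i.e. $W_n(\FF_p)\cong \ZZ/p^{n+1}$ as rings.

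So the crux is to compute $p^{k}\cdot 1$. First I would introduce the Verschiebung $V([x_0,x_1,\dots]) = [0,x_0,x_1,\dots]$, which is additive, and the Frobenius $F([x_0,x_1,\dots]) = [x_0^{p},x_1^{p},\dots]$, which is a ring homomorphism on the Witt vectors of a characteristic-$p$ ring, and invoke the universal identity $F\circ V = p\cdot\id$. The key simplification special to the ground ring $\FF_p$ is that $x^{p}=x$ there, so $F$ is the identity on $W(\FF_p)$; combined with $FV=p$ this gives $V = p\cdot\id$ on $W(\FF_p)$. Iterating, $p^{k}\cdot 1 = V^{k}(1) = [0,\dots,0,1,0,\dots]$, the unique nonzero entry $1$ sitting in position $k$.

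Reading this off in the truncation $W_n(\FF_p)$ finishes the argument: for $k\le n$ the vector $p^{k}\cdot 1$ has a $1$ in slot $k$ and so is nonzero, while $p^{n+1}\cdot 1$ would carry its $1$ in slot $n+1$, which has been truncated away, so $p^{n+1}\cdot 1 = 0$. Hence $1$ has additive order exactly $p^{n+1}$, which is all that was needed.

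I expect the main obstacle to be justifying the operator identities $FV = p$ and $F=\id$ on $W(\FF_p)$, since the excerpt has set up only the ghost map $w$ and the defining addition and multiplication polynomials, not $F$ and $V$. There are two ways around this. One is to develop $V$, $F$, and the relation $FV=p$ directly from the defining polynomials, which is routine but costs a few lines. The cleaner alternative, staying entirely inside the framework already in place, is to lift to the $p$-torsion-free ring $\ZZ$ (or $\ZZ_p$), where the ghost map is injective because the system $w_j = \sum_{i\le j} p^{i} c_i^{p^{j-i}}$ is triangular and uniquely solvable for the coordinates $c_i$; there I would note that $p^{k}\cdot 1$ has ghost vector $(p^{k},p^{k},\dots)$, solve the recursion for its Witt coordinates $c_0,\dots,c_n$, and reduce modulo $p$. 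Either route reduces to the same delicate point — showing that the first nonzero Witt coordinate of $p^{k}\cdot 1$ lands exactly in slot $k$ — which is precisely what pins the additive order of $1$ at $p^{n+1}$ rather than something smaller.
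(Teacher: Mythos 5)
Your argument is correct, and since the paper states this result without any proof (it is simply quoted as a classical theorem of Witt), there is no in-paper argument to compare it against; what you give is the standard proof. The reduction is exactly right: $W_n(\FF_p)$ has $p^{n+1}$ elements, the image of the unique ring map $\ZZ\to W_n(\FF_p)$ is the additive subgroup generated by $1$, so everything hinges on showing $1$ has additive order $p^{n+1}$. The only real content is the identity $p\cdot x=V(x)$ on $W(\FF_p)$, which you correctly isolate as the universal relation $F\circ V=p\cdot\mathrm{id}$ combined with the fact that on Witt vectors of an $\FF_p$-algebra the Frobenius $F$ is the functorial map induced by $x\mapsto x^p$ on the base, hence the identity on $W(\FF_p)$. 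Since the paper only sets up the ghost map, your fallback of lifting to $W(\ZZ)$ is the right way to stay self-contained: the ghost map is injective on a $p$-torsion-free ring because the system $w_j=\sum_{i\le j}p^{i}c_i^{p^{j-i}}$ is triangular, and one checks directly that the Witt coordinates of $p^k\cdot 1$ reduce mod $p$ to $0$ in slots below $k$ and to $1$ in slot $k$; this is elementary but, as you note, the fiddly part. Two small points worth a sentence each in a full write-up: first, justify that on the truncation $W_n$ the operator $V$ shifts coordinates and discards the top one (so that $p^{n+1}\cdot 1=0$ while $p^{n}\cdot 1\neq 0$); second, if you take the $F$, $V$ route rather than the ghost-map route, the claim that $F$ is coordinatewise $p$-th power does require the characteristic-$p$ hypothesis and is not immediate from the paper's setup, so it should be proved or cited explicitly.
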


\subsection{The Greenberg transform}
The Greenberg transform first appeared in Lang's thesis and can be found in \cite{Lang1952} and \cite{Lang1954}. It is essentially a way of converting polynomials over $\ZZ/p^n$ to polynomials in more indeterminates over $\FF_p$ using Witt vectors.

\begin{ex}
 To compute the second Greenberg transform of $f(x,y)=x^2 +y \in \ZZ[x,y]$ we compute $x^2+y$ using Witt additions and Witt multiplications
 \begin{eqnarray*}
 f([x_0,x_1],[y_0,y_1]) &=& [x_0,x_1]^2 + [y_0,y_1] \\
 &=& [x_0^2, 2x_0^px_1 + p x_1^2] +[y_0,y_1] \\
 &=& [x_0^2y_0, 2x_0^p x_1 + p x_1^2 + y_1 - \sum_{j=1}^{p-1} \frac{1}{p} { p\choose j} x_0^{2j} y_0^{p-j}] 
 \end{eqnarray*}
and get the polynomals
\begin{eqnarray*}
 x_0^2y_0, & 2x_0^p x_1 + p x_1^2 + y_1 - \sum_{j=1}^{p-1} \frac{1}{p} { p\choose j} x_0^{2j} y_0^{p-j}.
\end{eqnarray*}
\qed
\end{ex}

In general, the \textbf{$n$th Greenberg Transform of a polynomial} $f(x_0,x_1,\ldots,x_m) \in \ZZ[x_0,x_1,\ldots,x_m]$ is the set of polynomials
 $$g_0,g_1,\ldots,g_n \in \ZZ[x_{0,1},\ldots,x_{0,n}; x_{1,0}, \ldots, x_{1,n}; \ldots; x_{m,0},\ldots,x_{m,n} ]$$
where the $g_i$ are defined by the equation
$$[g_0,\ldots,g_n] = f([x_{0,0},\ldots,x_{0,n}],\ldots,[x_{n,0},\ldots,x_{n,n}]).$$

The \textbf{$n$th Greenberg transform of an ideal} $I \idealin \ZZ[x_0,x_1,\ldots,x_m]$ is the ideal $I'\idealin \ZZ[x_{0,1},\ldots,x_{0,n}; x_{1,0}, \ldots, x_{1,n}; \ldots; x_{m,0},\ldots,x_{m,n} ]$ generated by the $n$th Greenberg transforms of the polynomials in $I$. The \textbf{$n$th Greenberg transform of a subscheme of affine space} $X=V(I) \subset \AA^{m}_{\ZZ}$ is the scheme $ \Gr_n X \subset \AA^{nm}_{\ZZ}$ defined by $\Gr_nX = V(I')$ where $I'$ is the $n$th Greenberg transform of $I$. 

In general on can define the $n$th Greenberg transform as a functor $ \Gr_n: \Sch_{\ZZ} \to \Sch_{\FF_p}. $
This functor has the property that $(\Gr_nX)(\FF_p) = X(\ZZ/p^n).$

\subsection{Automorphisms of the affine line as points of algebraic groups}

We begin with an instructive example. 

\begin{ex}\label{greenberg of a2}
We will explain how to apply the Greenberg Transform to $A_2(\ZZ,p)$ of univariate polynomial automorphisms of degree two modulo $p^2$. We identify $f(T) = a + bT + pc T^2$ and $g(T) = a'+b'T + pc'T^2$ with $[a_0,a_1] + [b_0,b_1] T + [0,c_1]T^2$ and $[a_0',a_1']+[b_0',b_1']T + [0,b_1]T^2$ then we multiply out the vectors as we normally would giving Witt multiplications
\begin{eqnarray*}
 && [a_0,a_1] + [b_0,b_1] T + [0,c_1]T^2 \circ  [a_0',a_1'] + [b_0',b_1'] T + [0,c_1']T^2 \\
 &=& [a_0,a_1] + [b_0,b_1] [a_0',a_1'] + [0,c_1][a_0',a_1']^2 \\
 && +([b_0,b_1][b_0',b_1'] + 2[0,c_1][b_0',b_1'][a_0',a_1'])T \\
 && +([b_0,b_1][0,c_1'] + [0,c_1][b_0',b_1']^2)
\end{eqnarray*}
which gives
\begin{eqnarray*} 
 \ [a_0'',a_1''] &=& [a_0,a_1] + [b_0,b_1] [a_0',a_1'] + [0,c_1][a_0',a_1']^2 \\     
\  [b_0',b_1''] &=& [b_0,b_1][b_0',b_1'] + 2[0,c_1][b_0',b_1'][a_0',a_1'] \\
\  [c_0'',c_1''] &=& [b_0,b_1][0,c_1'] + [0,c_1'][b_0,b_1']^2
\end{eqnarray*}
which tells us how to transform the coordinates. One can multiply these out to get explicit polynomials using the rules for Witt addition and Witt multiplication:
\begin{eqnarray*}
 a_0'' &=& a_0 + b_0a_0' \\
 a_1'' &=& a_1 + b_1(a_0')^p + p a_1' b_1 + c_1(a_0')^{2p} + pc_1( 2 a_1'(a_0')^p + p (a_1')^2 ) \\
 b_0'' &=& b_0b_0' \\
 b_1'' &=& b_0^pb_1' + (b_0')^pb_1+pb_1b_1' + 2 ( (b_0'a_0')^p c_1 + p c_1( (b_0')^p a_1' + b_1'(b_0')^p + p b_1'a_1'))\\
 c_0'' &=& 0 \\
 c_1'' &=& b_0^p c_1' + p b_1 c_1' + (b_0')^{2p}c_1 + p c_1( 2 (b_0')^p b_1' + p (b_1')^2 
\end{eqnarray*}
If we assume that $a_0,a_1,b_0,b_1,c_1,a_0',a_1',b_0',b_1',c_1'$ are in $\FF_p$ these simplify to 
\begin{eqnarray*}
 a_0'' &=& a_0 + b_0a_0' \\
 a_1'' &=& a_1 + b_1a_0' c_1(a_0')^{2}\\
 b_0'' &=& b_0b_0' \\
 b_1'' &=& b_0 b_1' + b_0'b_1+ 2 ( b_0'a_0' c_1 + b_1'1b_0' )\\
 c_0'' &=& 0 \\
 c_1'' &=& b_0 c_1' +(b_0')^{2}c_1.
\end{eqnarray*}
These relations define an algebraic group $G/\FF_p$ where $G\cong \AA^5_{\FF_p}$ as varieties and the group multiplication is given by
$$\mu( (a_0,a_1,b_0,b_1,c_1), (a_0',a_1',b_0',b_1',c_1') ) = (a_0'',a_1'',b_0'',b_1'',c_1'').$$
From Theorem \ref{thm: witt and mod} it is clear that $G(\FF_p) \cong A_2(\ZZ)$.
  \qed
\end{ex}

The following theorem will define what we mean by ``The Greenberg Transform'' of the group $A_d(\ZZ,p)$. 
\begin{thm}
 There exists an algebraic group $G/\FF_p$ which is a subscheme of $\AA^{N}$ where $N = n+\frac{n(n+1)}{2}$ with the the property that 
 $$ G(\FF_p) \cong A_d(\ZZ,p).$$
\end{thm}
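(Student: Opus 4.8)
The plan is to realize $A_d(\ZZ,p)$ as the group of $\FF_p$-points of an explicit affine group scheme built via the Greenberg transform, exactly as carried out for $d=2$ in Example \ref{greenberg of a2}. First I would fix coordinates. Write a typical element as $\psi(T) = a_0 + a_1 T + p a_2 T^2 + \cdots + p^{d-1} a_d T^d \mod p^d$. Since the coefficient of $T^i$ is $p^{i-1} a_i$ and everything is read modulo $p^d$, the element $a_i$ is only well defined modulo $p^{d-i+1}$; thus $a_0, a_1 \in \ZZ/p^d$ while $a_i \in \ZZ/p^{d-i+1}$ for $2 \le i \le d$. Applying Witt's theorem \ref{thm: witt and mod}, $W_{m-1}(\FF_p) \cong \ZZ/p^m$, each $a_i$ is encoded by its Witt components: $d$ components for each of $a_0, a_1$, and $d-i+1$ components for each $a_i$ with $i \ge 2$. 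The total number of $\FF_p$-coordinates is $2d + \sum_{i=2}^{d}(d-i+1) = 2d + \tfrac{(d-1)d}{2} = \tfrac{d(d+3)}{2}$, which is exactly $N = n + \tfrac{n(n+1)}{2}$ for $n = d$, matching the $5$ coordinates $(a_0,a_1,b_0,b_1,c_1)$ of the $d=2$ case.

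Next I would produce the multiplication morphism. The composition $\psi \circ \psit$ is computed by substitution and a finite Taylor expansion, so each new coefficient is a universal polynomial over $\ZZ$ in the old coefficients $a_i,\at_j$; reducing these identities modulo the appropriate powers of $p$ and then applying the $(d-1)$st Greenberg transform converts each polynomial identity over $\ZZ/p^m$ into a system of polynomial identities in the Witt components over $\FF_p$. This is precisely the calculation displayed for $d=2$, and it defines a morphism $\mu\colon \AA^N_{\FF_p} \times \AA^N_{\FF_p} \to \AA^N_{\FF_p}$. The crucial point for the group-scheme axioms is that composition of formal polynomials is associative as an operation on the indeterminate coefficients, so associativity already holds as a universal polynomial identity over $\ZZ$; applying the Greenberg transform (which sends polynomial identities to polynomial identities) yields associativity of $\mu$ scheme-theoretically, not merely on $\FF_p$-points. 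The identity law is handled the same way.

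I would then address inversion, which is the delicate step. For $\psi$ to be invertible the linear coefficient $a_1$ must be a unit in $\ZZ/p^d$, equivalently its leading Witt component is nonzero in $\FF_p$; hence $G$ must live on the open locus where that component is invertible rather than on all of $\AA^N$. To make inversion a morphism I would use the inverse algorithm for $A_d(\ZZ,p)$ (from Example \ref{prop:A2_is_a_subgroup} and \cite{Dupuy2013}), which requires only a single division by the unit $a_1$; after the Greenberg transform this becomes division by the invertible Witt coordinate, expressible polynomially once that coordinate is inverted. Concretely one either restricts to the open subscheme where the leading coordinate is a unit, or adjoins one coordinate for its inverse with the defining relation, realizing $G$ as a closed subscheme of a slightly larger affine space. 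With $\mu$, the identity section, and the inverse all morphisms, $G$ is an affine group scheme over $\FF_p$.

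Finally I would conclude via the defining property $(\Gr_n X)(\FF_p) = X(\ZZ/p^n)$ of the Greenberg transform together with Witt's theorem \ref{thm: witt and mod}: the $\FF_p$-points of $G$ are exactly the coordinate tuples of elements of $A_d(\ZZ,p)$, and $\mu$ on points is composition of polynomials, so $G(\FF_p) \cong A_d(\ZZ,p)$ as groups. I expect the main obstacle to be the invertibility bookkeeping: the naive identification $G \cong \AA^N$ breaks because the leading coefficient must be a unit, so making inversion a genuine morphism forces $G$ onto an open locus (or into $\AA^{N+1}$ after adjoining an inverse coordinate). The second place demanding care is tracking which $a_i$ is recorded modulo which power of $p$, since this is what pins down the dimension count $N = n + \tfrac{n(n+1)}{2}$.
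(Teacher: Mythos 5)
Your proposal follows the paper's proof exactly: identify each coefficient $a_i$ with a truncated Witt vector of the appropriate length, multiply out the composition using Witt addition and Witt multiplication to obtain the structure polynomials over $\FF_p$, and count coordinates as $n + n + (n-1) + \cdots + 1 = n + \frac{n(n+1)}{2}$. You are in fact more careful than the paper on one point: the paper's argument (and Example \ref{greenberg of a2}, which asserts $G \cong \AA^5_{\FF_p}$ as varieties) passes over the unit condition on the linear coefficient, a condition the paper only imposes in the subsequent theorem by adjoining the relation $a_{10}y-1$; your restriction to the open locus where the leading Witt coordinate of $a_1$ is invertible (or the equivalent extra coordinate) is the correct way to make inversion a morphism.
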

\begin{proof}
We proceed as in example \ref{greenberg of a2} and identify
$$ a_0 + a_1 T + p a_2 T^2 + p^2 a_3 T^3 + \cdots + p^{n-1} a_n T^n \mod p^n $$
with 
$$ [a_{0,0},\ldots, a_{0,n-1}] + [a_{1,0},\ldots, a_{1,n-1}] T + [0,a_{2,1},\ldots,a_{2,n-1}] T^2 + \cdots + [0,0,\ldots,0,a_{n,n-1}] T^n.$$
We then multiply out two such vectors using Witt addition and Witt multiplication to get some algebraic relations.  

The formula for $N$ comes from the adding $n+n+(n-1) + \cdots + 1$.
\end{proof}

\begin{rem}
 ``The Greenberg Transform'' of $A_d(\ZZ,p)$ isn't a genuine Greenberg Transform since $A_d(\ZZ,p)$ isn't a scheme.
\end{rem}

The proof for the full group of automorphisms is quite similar. 
\begin{thm}
 The group $\Aut(\AA^1_{\ZZ/p^{n+1}})$ is isomorphic to the $\FF_p$ points of an algebraic variety.
\end{thm}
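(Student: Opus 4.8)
The plan is to run the same Witt-coordinate construction as in Example \ref{greenberg of a2} and the preceding theorem, but now over an unbounded number of coefficients. First I would pin down which polynomials lie in $\Aut(\AA^1_{\ZZ/p^{n+1}})$: a polynomial $\psi(T)=\sum_{i\geq 0} a_i T^i$ is a composition automorphism of $\AA^1_B$ if and only if $a_1\in B^\times$ and every $a_i$ with $i\geq 2$ is nilpotent (the standard criterion, which for reduced $B$ recovers the affine maps $aT+b$). Over $B=\ZZ/p^{n+1}$ the nilpotents are exactly the multiples of $p$, so an automorphism is a polynomial with $a_1$ a unit, $a_i\in (p)$ for $i\geq 2$, and $a_0$ arbitrary. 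Using $\ZZ/p^{n+1}\cong W_n(\FF_p)$ from Theorem \ref{thm: witt and mod}, I would encode each coefficient $a_i$ as a truncated Witt vector $[a_{i,0},\ldots,a_{i,n}]$ with entries in $\FF_p$; the unit condition becomes the open condition $a_{1,0}\in \FF_p^\times$, while the nilpotence condition $a_i\in(p)$ becomes the vanishing $a_{i,0}=0$ for all $i\geq 2$.

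Next I would assemble these coordinates into a scheme. Fixing the constraints $a_{i,0}=0$ for $i\geq 2$ and using the remaining Witt components as coordinates, the automorphisms of degree at most $d$ are the $\FF_p$-points of an open subscheme $X_d$ of the affine space $\AA^{N(d)}$, where $N(d)=2(n+1)+n(d-1)$ and the open condition is $a_{1,0}\neq 0$. Since composition can strictly raise the degree, the $X_d$ are not themselves subgroups, so the natural object is the filtered colimit $X=\varinjlim_d X_d$, an infinite-dimensional (ind-)affine space whose $\FF_p$-points are the finitely supported families of Witt vectors, i.e. exactly the polynomials in $\Aut(\AA^1_{\ZZ/p^{n+1}})$.

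I would then transport the group law. Exactly as in Example \ref{greenberg of a2}, substituting the Witt vectors for $\psit$ into those for $\psi$ and expanding by Witt addition and Witt multiplication expresses every Witt component of $\psi\circ\psit$ as an integral polynomial in the components of $\psi$ and $\psit$; reducing these polynomials modulo $p$ gives morphisms over $\FF_p$. Because $\deg(\psi\circ\psit)\leq \deg\psi\cdot\deg\psit$, composition restricts to morphisms $X_d\times X_{d'}\to X_{dd'}$ compatible with the filtration, and hence defines a multiplication on $X$; inverses are handled the same way (and can be made explicit by the algorithm of \S\ref{inverses}). This exhibits $X$ as an ind-group-scheme over $\FF_p$ with $X(\FF_p)\cong \Aut(\AA^1_{\ZZ/p^{n+1}})$.

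The main obstacle is precisely this unboundedness of the degree: unlike $A_d(\ZZ,p)$, the full automorphism group is not a finite-dimensional variety, and composition does not preserve any finite-dimensional stratum. The real work is therefore the bookkeeping needed to present the group as a colimit and to verify that multiplication and inversion respect the filtration, so that they are genuine morphisms of ind-schemes; once this is set up, each individual coordinate formula is already covered by the integral Witt computation used in the bounded-degree case.
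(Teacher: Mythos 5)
Your proposal is correct and follows essentially the same route as the paper: encode each coefficient as a truncated Witt vector via $\ZZ/p^{n+1}\cong W_n(\FF_p)$, impose $a_{i,0}=0$ for $i\geq 2$ and invertibility of $a_{1,0}$, and transport composition through the integral Witt addition and multiplication polynomials exactly as in the bounded-degree case. The only cosmetic differences are that the paper enforces the unit condition by adjoining a symbol $y$ with the equation $a_{1,0}y-1$ rather than by passing to an open locus, and it leaves the infinite-dimensionality implicit where you organize it explicitly as a filtered colimit of finite-degree strata.
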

\begin{proof}
 The proof is similar. We replace coordinate $a_0,a_1,\ldots$ appearing in polynomials
 $$ a_0 + a_1 T + a_2 T^2 + \cdots + a_d T^d \mod p^{n+1} \in \Aut(\AA^1_{\ZZ/p^{n+1}}) $$
 with Witt coordinates $[a_{i0},\ldots, a_{in}]$ where $0\leq i $ and impose the additions and multiplications as usual. The only difference from the proof for $A_n(\ZZ,n,p)$ is that that we have an infinite number of indeterminates and that we need to stipulate that the polynomial are affine linear modulo $p$. 
 To do this we adjoint an extra symbol $y$ and the additional equation $a_{10}y-1$ as usual in algebraic geometry. 
\end{proof}

\section{Solvablity}\label{solvable}

\subsection{Solvable groups}
Recall that a group $G$ is \textbf{solvable} if and only if it admits a composition series 
	$1=G_0 \idealin G_1 \idealin \ldots \idealin G_l = G$
such that for every $0\leq i\leq l$ the factor groups
	$G_{i+1}/G_{i} =: A_{i+1}$
are abelian. 

We construct the class of groups which are \textbf{built by abelians} inductively: A group $G$ is built by abelians if one of the follow holds
\begin{description}
 \item[base case] $G$ is abelian
 \item[inductive step] 
 \begin{itemize}
  \item $G$ is an extension of a group built by abelians by an abelian group.
  \item $G$ is an extension of an abelian group by a group built by abelians.
 \end{itemize}
\end{description}

\begin{lem}
A group $G$ is solvable if and only if it is built by abelians.
\end{lem}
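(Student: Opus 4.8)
The plan is to prove both implications by induction, the crux of each being the standard closure property that an extension of a solvable group by a solvable group is again solvable.

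First I would isolate that closure property as the engine of the argument: if $1 \to N \to G \to Q \to 1$ is exact with $N$ and $Q$ both solvable, then $G$ is solvable. Indeed, pick solvable series $1 = N_0 \idealin N_1 \idealin \cdots \idealin N_s = N$ and $1 = Q_0 \idealin Q_1 \idealin \cdots \idealin Q_t = Q$ with abelian factors, let $\pi : G \to Q$ denote the quotient map, and set $G_i = \pi^{-1}(Q_i)$. Then $G_0 = N$ and $G_t = G$; each $G_i \idealin G_{i+1}$ and, by the third isomorphism theorem, $G_{i+1}/G_i \cong Q_{i+1}/Q_i$ is abelian. Splicing the preimage chain above $N$ onto the given series of $N$ produces
$$ 1 = N_0 \idealin \cdots \idealin N_s = N = G_0 \idealin \cdots \idealin G_t = G, $$
a subnormal series for $G$ with abelian factors, so $G$ is solvable.

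For the direction that \emph{built by abelians} implies solvable, I would induct on the derivation witnessing that $G$ is built by abelians. In the base case $G$ is abelian, hence solvable via $1 \idealin G$. In the inductive step $G$ fits into an extension $1 \to N \to G \to Q \to 1$ in which one of $N, Q$ is abelian and the other is built by abelians; by the inductive hypothesis the latter is solvable, the former is trivially solvable, and the closure property above gives solvability of $G$. For the converse, I would induct on the length $l$ of a solvable series $1 = G_0 \idealin \cdots \idealin G_l = G$. When $l \le 1$ the group $G$ is abelian and the base case of the inductive definition applies. Otherwise $G_{l-1} \idealin G$ has abelian quotient $G/G_{l-1}$, while $G_{l-1}$ inherits a solvable series of length $l-1$ and so is built by abelians by induction; thus $G$ is an extension of the abelian group $G/G_{l-1}$ by the group $G_{l-1}$ built by abelians, which is exactly the second form of the inductive step, and $G$ is built by abelians.

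The only point genuinely requiring care is the closure-under-extensions lemma, specifically checking that the preimages $G_i = \pi^{-1}(Q_i)$ satisfy $G_i \idealin G_{i+1}$ and identifying the factors $G_{i+1}/G_i \cong Q_{i+1}/Q_i$; once that is in place, the remainder is bookkeeping over the two inductions, together with matching the two clauses of the inductive definition to the two slots (kernel and quotient) of a short exact sequence.
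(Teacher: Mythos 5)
Your proof is correct and follows essentially the same route as the paper: induction in each direction, with the key step being the preimage construction $G_i=\pi^{-1}(Q_i)$ and the third isomorphism theorem to transport a subnormal series with abelian factors across an extension. The only (beneficial) difference is organizational: you factor out the closure of solvability under extensions as a single lemma and apply it uniformly to both clauses of the inductive definition, whereas the paper treats the two extension cases separately, doing the splicing argument explicitly only in the case where the abelian group is the kernel.
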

Since we don't know of a good reference we give the proof here.
\begin{proof}
Suppose that $G$ is solvable and let $l(G)$ denote the length of the minimal composition series. 
We will show that it is built by abelians by induction on the length $l(G)$ of a minimal composition series for $G$.

Suppose that $1(G)=1$. Then $G_1=G_1/G_0 = A_1$ and $G_1$ is abelian.

Now suppose the proposition if true for $l(G)=n-1$. Given a composition series for $G=G_n$ we have a composition series for $G_{n-1}$ which shows that $G_{n-1}$ is solvable. By inductive hypothesis $G_{n-1}$ is built by abelians. The exact sequence
	$$1 \to G_{n-1} \to G_n  \to A_n \to 1,$$
which shows that $G_n$ is built by abelians. 
In particular every solvable group is built from extending abelian groups by solvable groups.

We will now prove the converse. 
Let $G$ be a group built by abelians. 
Let $c(G)$ be the minimal number of admissible extensions required to built $G$. 
Our proof will be by induction on $c(G)$. 
If $c(G)=1$ then $G$ is solvable since every abelian group is solvable.

Suppose $c(G)=n$ and that $G=G_n$ is an extension of an abelian group $A_n$ by a group built by abelians $G_{n-1}$. This means we have an exact sequence  
	$$ 1 \to G_{n-1} \to G_{n} \to  A_{n} \to 1$$.
which implies that $G_n$ is solvable since $G_{n-1}$ is by inductive hypothesis.

Suppose now that $G_n$ is an extension of $G_{n-1}$ by an abelian group $A$: 
$$1 \to A \to G_{n} \to G_{n-1}. $$
Let $p_n: G_n \to G_{n-1}$ with $\ker(p_n) = A$ as above. 
Since $G_{n-1}$ is built from abelians it is solvable by inductive hypothesis. 
In particular there exists a sequence of groups subgroups 
$$1=\Gamma_0 < \ldots < \Gamma_m =  G_{n-1}$$
such that $\Gamma_j/\Gamma_{j-1} = B_j$ where $A$ is abelian.
Define $G_j' = \pi^{-1}(\Gamma_j)$. 
We have $G_{j-1}' \idealin G_{j}'$ and $A \subset G_j$.
We also have $G_j'/A \cong \Gamma_j$ so $G_j'/G_{j-1}' \cong (G_j'/A)/(G_{j-1}'/A) \cong \Gamma_j/\Gamma_{j-1} = B_j$ which is abelian.
In addition $G_0'=A$ so we have constructed a composition series for $G_n$ and hence $G_n$ is abelian.

\end{proof}
This lemma just says that solvable groups are built from abelian groups. 

\subsection{Abelian normal subgroups}\label{abelian kernels}

The following lemma will allow us to build the groups $A_d(R,q)$ out of abelian ones.
\begin{lem}\label{prop: kernel}
 Let $R$ be a $q$-torsion free ring with $qR\in \Spec(R)$. The kernel the natural map $\pi_d:A_d(R,q) \to A_{d-1}(R,q)$ is isomorphic to $R_0^{d+1}$.
\end{lem}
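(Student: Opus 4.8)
The plan is to identify $\pi_d$ explicitly, read off the kernel as a concrete set of polynomials, and then upgrade the resulting bijection with $R_0^{d+1}$ to a group isomorphism. Since $A_d(R,q)\subset\Aut(\AA^1_{R_{d-1}})$ with $R_{d-1}=R/q^d$ and $A_{d-1}(R,q)\subset\Aut(\AA^1_{R_{d-2}})$ with $R_{d-2}=R/q^{d-1}$, the natural map $\pi_d$ is reduction modulo $q^{d-1}$. Thus an element $\psi = a_0 + a_1 T + q a_2 T^2 + \cdots + q^{d-1}a_d T^d$ of $A_d(R,q)$ lies in $\ker\pi_d$ exactly when $\psi\equiv T \mod q^{d-1}$. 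Reading this congruence off coefficient by coefficient shows that every coefficient of $\psi$ other than the linear one must be divisible by $q^{d-1}$, while the linear coefficient must be $\equiv 1 \mod q^{d-1}$. Because $R$ is $q$-torsion free, multiplication by $q^{d-1}$ induces an isomorphism $R_0 = R/qR \xrightarrow{\sim} q^{d-1}R/q^d R$, so each such coefficient is uniquely of the form $q^{d-1}c_k$ with $c_k\in R_0$. Hence every kernel element is uniquely
$$ \psi = T + q^{d-1}P(T), \qquad P(T) = c_0 + c_1 T + \cdots + c_d T^d, \quad c_k\in R_0, $$
which already gives the set-theoretic bijection with $R_0^{d+1}$ via $\psi \leftrightarrow (c_0,\ldots,c_d)$.

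Next I would check that this bijection is a group isomorphism onto the additive group $(R_0^{d+1},+)$ by computing composition. Given $\psi_1 = T + q^{d-1}P$ and $\psi_2 = T + q^{d-1}Q$ in the kernel, I expand
$$ \psi_1\circ\psi_2 = \psi_2 + q^{d-1}P(\psi_2) = T + q^{d-1}Q + q^{d-1}P\big(T + q^{d-1}Q\big) \mod q^d. $$
The Taylor expansion of $P(T+q^{d-1}Q)$ about $T$ contributes $P(T)$ plus corrections each carrying a factor $q^{d-1}$, so those corrections enter $\psi_1\circ\psi_2$ with a factor $q^{2(d-1)}$. For $d\geq 2$ we have $2(d-1)\geq d$, hence $q^{2(d-1)}\equiv 0 \mod q^d$ and the corrections vanish. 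This leaves $\psi_1\circ\psi_2 = T + q^{d-1}(P+Q) \mod q^d$, i.e. composition corresponds to addition of $P$ and $Q$, equivalently addition of the coordinate vectors in $R_0^{d+1}$.

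Two small points then remain, and I expect the additivity computation above to be the only genuine content. First, one must confirm that every $\psi = T + q^{d-1}P$ is actually an automorphism, so that the kernel is described exactly rather than merely contained in this set; this is immediate from the additivity, since $T + q^{d-1}(-P)$ is a two-sided compositional inverse. Second, the identification $q^{d-1}R/q^d R\cong R_0$ uses $q$-torsion freeness essentially: if $q^{d-1}r\in q^d R$ then $q^{d-1}(r-qs)=0$ forces $r\in qR$, giving injectivity. The main obstacle is thus simply the degree and order bookkeeping guaranteeing that the quadratic-and-higher Taylor corrections all land in $q^d R$; once that is in place, both bijectivity and the homomorphism property follow formally.
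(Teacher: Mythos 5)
Your proposal is correct and follows essentially the same route as the paper: identify kernel elements as $\psi = T + q^{d-1}P(T)$ with $\deg P \le d$, compute that composition reduces to addition of the $P$'s because the higher Taylor corrections carry a factor $q^{2(d-1)} \equiv 0 \bmod q^d$, and read off the isomorphism with $(R_0^{d+1},+)$. You are in fact somewhat more careful than the paper, which glosses over the uniqueness of the representation (your $q$-torsion-freeness argument) and the verification that every such $\psi$ is invertible.
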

\begin{proof} The group $N_d(R,q):= \ker(\pi_d:A_d(R,q) \to A_{d-1}(R,q))$ consists of elements of the form
 \begin{equation}
  \psi(T) = q^{d-1} a_0 + (1 + a_1 q^{d-1}) T + q^{d-1} a_2 T^2 + \cdots + q^{d-1} a_d T^d \mod q^d
 \end{equation}
whose reduction mod $q^{d-1}$ is the identity. This is clearly a normal subgroup. We now show that it is closed under composition; every $\psi$ can be written as $\psi(T) = T + q^{d-1}\psi'(T)$. Suppose that $\varphi(T) = T + q^{d-1}\varphi'(T)$ then we have
$$ \varphi(\psi(T)) = \psi(T) + q^{d-1}\varphi'(T + q^{d-1} \psi'(T)) = \psi(T) + \varphi'(T) = T + \psi'(T) + \varphi'(T) \mod q^{d}.$$
From this expression it is now clear that $N_d(R,q) \cong R_0^d$ where the isomorphism is given by
 $$ q^{d-1} a_0 + (1 + a_1 q^{d-1}) T + q^{d-1} a_2 T^2 + \cdots + q^{d-1} a_d T^d \mod q^d \mapsto (a_0,a_1,a_2,\cdots,a_d).$$
\end{proof}

\begin{thm}
If $R$ is a $q$-torsion free ring with $qR\in \Spec(R)$ then the groups $A_d(R,q)$ are solvable.
\end{thm}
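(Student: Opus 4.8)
The plan is to argue by induction on $d$, using the two preparatory results already in hand: the lemma that a group is solvable if and only if it is \emph{built by abelians}, and Lemma \ref{prop: kernel}, which exhibits an abelian kernel for the truncation map. These reduce the theorem to assembling $A_d(R,q)$ from abelian pieces one layer at a time.

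For the base case $d=1$ I would note that $A_1(R,q)=\Aut(\AA^1_{R_0})$ is the ordinary affine group over the domain $R_0=R/qR$; here I use that $qR\in\Spec(R)$ forces $R_0$ to be an integral domain, so (as recalled in the abstract) every automorphism really has the form $a_0+a_1T$ with $a_1\in R_0^{\times}$. This group is metabelian: the translations $a_0+T$ form a normal subgroup isomorphic to $(R_0,+)$, with quotient the abelian group $R_0^{\times}$. Thus $A_1(R,q)$ is an extension of an abelian group by an abelian group, hence built by abelians, hence solvable.

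For the inductive step, assume $A_{d-1}(R,q)$ is solvable, i.e. built by abelians. The key object is the short exact sequence
$$ 1 \longrightarrow N_d(R,q) \longrightarrow A_d(R,q) \xrightarrow{\pi_d} A_{d-1}(R,q) \longrightarrow 1, $$
where $\pi_d$ is reduction modulo $q^{d-1}$ and $N_d(R,q)=\ker\pi_d$ is abelian by Lemma \ref{prop: kernel}. Granting surjectivity of $\pi_d$, the group $A_d(R,q)$ is an extension of the group built by abelians $A_{d-1}(R,q)$ by the abelian group $N_d(R,q)$; this is precisely the first inductive clause in the definition of ``built by abelians'', so $A_d(R,q)$ is built by abelians and therefore solvable by the lemma.

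The one point I would check with care, and the only step that is not purely formal, is the surjectivity of $\pi_d$: I must verify that every element of $A_{d-1}(R,q)$ admits a lift to $A_d(R,q)$ of the prescribed triangular form. This amounts to lifting the coefficients $a_0,\dots,a_{d-1}$ arbitrarily from $R_{d-2}$ to $R_{d-1}$ and adjoining a top term $q^{d-1}a_dT^d$; the resulting polynomial keeps the required shape, and its linear coefficient, being a unit modulo $q$, is a unit in $R_{d-1}$ since $qR_{d-1}$ is nilpotent, so the lift is again invertible and lies in $A_d(R,q)$. Once this surjectivity is in place, the induction closes and the theorem follows.
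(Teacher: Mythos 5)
Your proof matches the paper's argument essentially exactly: induction on $d$ with base case the metabelian affine group $A_1(R,q)\cong R_0\rtimes R_0^{\times}$, and inductive step via the short exact sequence $1\to N_d(R,q)\to A_d(R,q)\to A_{d-1}(R,q)\to 1$ with abelian kernel supplied by Lemma \ref{prop: kernel}. Your explicit verification that $\pi_d$ is surjective is a point the paper leaves implicit, but it is a welcome and correct addition rather than a divergence.
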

\begin{proof}
The proof is by induction on $d$. For $d=1$, $A_1(R,q) \cong R_0 \otimes R_0^{\times}$ which is clearly solvable. We will now assume the proposition is true for $d-1$ and prove it for $d$. For each $d$ we have the exact sequence
$$N_d(R,q) \to A_d(R,q) \to A_{d-1}(R,q) \to  1 $$
Which shows that $A_d(R,q)$ is an extension of $A_{d-1}(R,q)$ by the abelian group $N_d(R,q) \cong R_0^{d+1}$ and since $A_{d-1}(R,q)$ is solvable by hypothesis we are done.
\end{proof}

We spend the rest of this section generalizing the above results for the groups $\Aut(\AA^1_{R/q^n})$ and $\Aut(\AA^1_{\ZZ/m})$. We will show that both of these groups are solvable. 

We start with the following simple lemma:
\begin{lem}\label{lem: commuting}
Let $R$ be a $q$-torsion free ring and suppose $f_1,f_2 \in \Aut(\AA^1_{R/q^n})$. 
If $r+s\geq n$ and
 \begin{eqnarray*}
  f_1(T) &=& T \mod q^r\\
  f_2(T) &=& T \mod q^s 
 \end{eqnarray*}
 then
 \begin{equation}
  f_1 \circ f_2 = f_2 \circ f_1 \mod q^n
 \end{equation}
\end{lem}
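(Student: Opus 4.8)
The plan is to reduce the statement to a single elementary observation: over any commutative ring the ``shift'' of a polynomial differs from the polynomial by a multiple of the shift parameter, with no denominators appearing. First I would use the hypotheses to write $f_1(T) = T + q^r g_1(T)$ and $f_2(T) = T + q^s g_2(T)$ for suitable $g_1, g_2 \in (R/q^n)[T]$. This is possible because the coefficients of $f_1(T) - T$ all lie in the ideal $q^{r}(R/q^n)$ (and those of $f_2(T)-T$ in $q^{s}(R/q^n)$), so each coefficient can be divided by the appropriate power of $q$; any choice of $g_i$ works, since $q^r g_i$ depends only on $g_i$ modulo $q^{n-r}$.

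Next I would record the key algebraic fact: for any polynomial $g$ over a commutative ring, $g(X + Y) - g(X)$ is divisible by $Y$, i.e. $g(X + Y) = g(X) + Y \cdot P(X,Y)$ for some $P$ with coefficients in the same ring. This follows termwise from the \emph{integer}-coefficient binomial expansion of $(X+Y)^k - X^k$, and it is the correct substitute for a Taylor expansion here precisely because it introduces no factors of $1/j!$. Applying it with $Y = q^s g_2(T)$ gives $g_1(f_2(T)) = g_1(T) + q^s h_1(T)$ for some $h_1 \in (R/q^n)[T]$, whence
\[
 f_1(f_2(T)) = f_2(T) + q^r g_1(f_2(T)) = T + q^s g_2(T) + q^r g_1(T) + q^{r+s} h_1(T).
\]
Since $r + s \geq n$ the last term vanishes, so $f_1 \circ f_2 = T + q^r g_1(T) + q^s g_2(T) \mod q^n$. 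Running the identical computation with the roles of $f_1$ and $f_2$ interchanged yields $f_2 \circ f_1 = T + q^s g_2(T) + q^r g_1(T) \mod q^n$, which is the same polynomial; this gives the asserted equality.

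I expect the only real subtlety to be the one flagged above, namely expanding $g_1(T + q^s g_2(T))$ so that no denominators enter. I handle this by invoking the integer binomial identity rather than derivatives, which keeps the argument valid over an arbitrary commutative ring. Everything else is bookkeeping with powers of $q$, and the $q$-torsion freeness of $R$ is needed only to make the divisions $f_i(T) - T = q^{\bullet} g_i(T)$ well behaved.
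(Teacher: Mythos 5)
Your proposal is correct and follows essentially the same route as the paper: decompose $f_i(T) = T + q^{\bullet} g_i(T)$, compose, and observe that $q^r\bigl(g_1(T + q^s g_2(T)) - g_1(T)\bigr)$ vanishes modulo $q^n$ because $r+s \geq n$. You simply make explicit, via the integer-coefficient binomial expansion, the divisibility fact the paper states in one line, and you note the symmetry of the resulting expression $T + q^r g_1(T) + q^s g_2(T)$ rather than leaving it implicit.
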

\begin{proof}
 Write $f_1(T) = T + q^r g_1(T)$ and $f_2(T) = T + q^s g_2(T)$ then we have 
 \begin{eqnarray*}
  f_1(f_2(T)) &=& f_2(T) + q^r g_1(f_2(T)) \\
  &=& T + q^s g_2(T) + q^rg_1(T + q^s g_2(T) ) \\
  &=& T + q^s g_2(T) + q^r g_1(T) \mod q^n 
 \end{eqnarray*}
where the last line follows from the fact that if $a \equiv b \mod q^r$ then $a q^s \equiv b q^s \mod q^n$.
\end{proof}

\begin{cor}
Let $R$ be a $q$-torsion free ring. Let
 \begin{eqnarray*}
 N_{n,r}(R,q) &:=& \lbrace f \in A_n(R) : f(T) \equiv T \mod q^r \rbrace \\
&=& \ker(\pi_{n.r}: A_n \to A_r ),\\
K_{n,r}(R,q) &:=& \lbrace f \in \Aut(\AA^1_{R_{n-1}}) : f(T) \equiv T \mod q^r \rbrace \\
&=& \ker(\pi_{n.r}: \Aut(\AA^1_{R_{n-1}}) \to \Aut(\AA^1_{R_{r-1}})),
 \end{eqnarray*}
 If $r>n/2$ then both of these groups are abelian.
\end{cor}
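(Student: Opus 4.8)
The plan is to derive this as an immediate consequence of Lemma \ref{lem: commuting}. First I would observe that both $N_{n,r}(R,q)$ and $K_{n,r}(R,q)$ are genuine subgroups, being the kernels of the reduction homomorphisms $\pi_{n,r}$; in particular each is closed under composition and inverses, so to establish abelianness it suffices to show that any two of its elements commute.

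So let $f_1, f_2$ lie in one of these groups. By definition both reduce to the identity modulo $q^r$, that is $f_1(T) \equiv T \mod q^r$ and $f_2(T) \equiv T \mod q^r$. I would then apply Lemma \ref{lem: commuting} in the special case $r = s$. The hypothesis of that lemma requires $r + s \geq n$, which here reads $2r \geq n$; this is guaranteed by the assumption $r > n/2$, since then $2r > n$. The lemma therefore yields $f_1 \circ f_2 = f_2 \circ f_1 \mod q^n$, which is exactly the statement that $f_1$ and $f_2$ commute in $\Aut(\AA^1_{R_{n-1}})$, where composition is computed modulo $q^n$. Because $A_n(R,q) \subset \Aut(\AA^1_{R_{n-1}})$, the very same computation handles $N_{n,r}(R,q)$ as well, so the two cases of the corollary are proved simultaneously.

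There is essentially no hard step here: Lemma \ref{lem: commuting} has already isolated the one nontrivial point, namely that the tail correction terms of $f_1$ and $f_2$ interact trivially once $r + s \geq n$, since a congruence modulo $q^r$ may be multiplied by $q^s$ to give a congruence modulo $q^n$. The only thing worth double-checking is the boundary of the inequality: the lemma needs $2r \geq n$, and the hypothesis $r > n/2$ delivers the strict inequality $2r > n$, so the condition is comfortably satisfied (indeed $r \geq n/2$ would already suffice when $n$ is even). I would close by noting that this abelianness of the top layers of the filtration is precisely the input required for the solvability arguments for $\Aut(\AA^1_{R/q^n})$ and $\Aut(\AA^1_{\ZZ/m})$ in the remainder of the section.
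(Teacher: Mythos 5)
Your argument is correct and is essentially the paper's own proof: both apply Lemma \ref{lem: commuting} with $r=s$, using $r>n/2$ to get $2r\geq n$, and then handle $N_{n,r}(R,q)$ via its inclusion in $K_{n,r}(R,q)$. Nothing further is needed.
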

\begin{proof}
 This follows from Lemma \ref{lem: commuting} since every pair of polynomial in $K_{n,r}(R,q)$ for $r>n/2$ commutes. Since $N_{n,r}(R,q) \subset K_{n,r}(R,q)$ we are done.
\end{proof}

The arguement in the proof of Lemma \ref{lem: commuting} can actually be used to prove something slightly more general.
\begin{lem}
 Let $R$ be a ring and $I,J\idealin R$ with $I^2=0$. The group 
  $$ \ker( \Aut(\AA^1_{R}) \to \Aut(\AA^1_{R/I}) )$$
 is abelian.
\end{lem}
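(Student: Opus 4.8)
Let $R$ be a ring and $I,J \idealin R$ with $I^2 = 0$. The group
$$ \ker\bigl( \Aut(\AA^1_{R}) \to \Aut(\AA^1_{R/I}) \bigr) $$
is abelian.

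The plan is to imitate directly the computation in Lemma \ref{lem: commuting}, replacing the principal ideals $q^r R$ by the abstract ideal $I$ and using the hypothesis $I^2 = 0$ as the analogue of the inequality $r + s \geq n$. First I would unwind what it means to lie in the kernel: an automorphism $f$ of $\AA^1_R$ reducing to the identity modulo $I$ is exactly a polynomial $f(T) = T + g(T)$ where every coefficient of $g$ lies in $I$; that is, $g(T) \in I[T]$. (Here $J$ plays no role in the statement as written and can be ignored.) I would record this as the defining shape of a general element of the kernel, so that composing two such elements becomes a matter of substituting one into the other.

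The key computation is then the following: given two kernel elements $f_1(T) = T + g_1(T)$ and $f_2(T) = T + g_2(T)$ with $g_1, g_2 \in I[T]$, I would expand
$$ f_1(f_2(T)) = f_2(T) + g_1(f_2(T)) = T + g_2(T) + g_1\bigl(T + g_2(T)\bigr). $$
The crucial step is to Taylor-expand $g_1(T + g_2(T))$ about $T$: all terms beyond the zeroth involve a factor of $g_2(T) \in I[T]$ multiplied by a coefficient of $g_1$ (or a derivative thereof), which also lies in $I[T]$. Since $I^2 = 0$, every such product vanishes, so $g_1(T + g_2(T)) = g_1(T)$. This yields $f_1(f_2(T)) = T + g_1(T) + g_2(T)$, an expression manifestly symmetric in the two indices, and hence $f_1 \circ f_2 = f_2 \circ f_1$. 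This is the exact analogue of the step in Lemma \ref{lem: commuting} where $q^r g_1(T + q^s g_2(T)) \equiv q^r g_1(T) \bmod q^n$, with $I^2 = 0$ doing the work that $r + s \geq n$ did there.

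The only point requiring genuine care — and the step I would expect to be the main obstacle to making fully rigorous — is the Taylor-expansion argument, since $I$ need not contain any power of an element and $R$ need not be a $\QQ$-algebra, so dividing by factorials is not available. I would therefore avoid literal derivatives and instead argue combinatorially: writing $g_1(T) = \sum_k c_k T^k$ with $c_k \in I$, one has $g_1(T + g_2(T)) = \sum_k c_k (T + g_2(T))^k$, and expanding each $(T+g_2(T))^k$ by the binomial theorem, every cross term carries a factor $g_2(T) \in I[T]$ alongside the coefficient $c_k \in I$, so lies in $I^2[T] = 0$. Only the leading term $c_k T^k$ survives, giving $g_1(T+g_2(T)) = g_1(T)$ with no division required. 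This makes the identity $f_1 \circ f_2 = f_2 \circ f_1$ hold over an arbitrary base ring, completing the proof.
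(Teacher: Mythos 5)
Your proof is correct and follows exactly the route the paper intends: the paper gives no separate argument for this lemma, simply remarking that the computation in Lemma \ref{lem: commuting} carries over, and you carry it over faithfully, with $I^2=0$ replacing the condition $r+s\geq n$. Your extra care in replacing the Taylor expansion by a direct binomial expansion (so that no division by factorials is needed) is a worthwhile refinement of the paper's sketch, not a departure from it.
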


This means for $R = \ZZ/m$ where $m = p_1^{n_1}\cdots p_s^{n_s}$ and $I = (m') \subset \ZZ/m$ where $m' = p_1^{r_1}\cdots p_s^{r_s}$ and $r_i>n_i/2$ for $i=1,\ldots,s$ we can apply our technique of solvability. 
We summarize our discussion in the following theorem.
\begin{thm}
 The following groups are solvable
 \begin{itemize}
  \item $\Aut(\AA^1_{R/q^n})$, where $R$ $q$-torsion free
  \item $\Aut(\AA^1_{\ZZ/m})$
 \end{itemize}
\end{thm}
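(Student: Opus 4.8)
The plan is to realize each group as an iterated extension whose successive kernels are abelian, so that the characterization of solvable groups as exactly those \emph{built by abelians} (the preceding lemma) closes the argument. All of the genuinely group-theoretic work has already been isolated: the lemma asserting that $\ker(\Aut(\AA^1_R)\to\Aut(\AA^1_{R/I}))$ is abelian whenever $I^2=0$ is precisely what produces the abelian kernels below.

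For the first bullet I would induct on $n$. In the base case $n=1$ the ring $R/q$ is a domain (since $qR\in\Spec(R)$), so $\Aut(\AA^1_{R/q})$ consists only of the affine maps $aT+b$; this is the metabelian group $R_0\rtimes R_0^{\times}$ and hence solvable. For the inductive step set $\bar R=R/q^{n}$ and $I=q^{n-1}\bar R$. Then $\bar R/I\cong R/q^{n-1}$ and $I^2=q^{2(n-1)}\bar R=0$ because $2(n-1)\geq n$ for $n\geq 2$. Reduction mod $q^{n-1}$ gives a short exact sequence
$$1\to K_{n,n-1}(R,q)\to \Aut(\AA^1_{R/q^{n}})\to \Aut(\AA^1_{R/q^{n-1}})\to 1,$$
in which the kernel $K_{n,n-1}(R,q)=\ker(\Aut(\AA^1_{\bar R})\to\Aut(\AA^1_{\bar R/I}))$ is abelian by the $I^2=0$ lemma, while the quotient is solvable (built by abelians) by the inductive hypothesis. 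Thus $\Aut(\AA^1_{R/q^{n}})$ is an extension of a group built by abelians by an abelian group, hence built by abelians, hence solvable.

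For the second bullet I would first reduce to the prime-power case by the Chinese Remainder Theorem. Writing $m=p_1^{n_1}\cdots p_s^{n_s}$, we have $\ZZ/m\cong\prod_i\ZZ/p_i^{n_i}$, and since $\AA^1$ over a finite product of rings splits along the orthogonal idempotents, $\Aut(\AA^1_{\ZZ/m})\cong\prod_i\Aut(\AA^1_{\ZZ/p_i^{n_i}})$. Each factor is solvable by the first bullet applied to $R=\ZZ$, $q=p_i$ (which is $q$-torsion free with $p_i\ZZ\in\Spec\ZZ$), and a finite product of solvable groups is solvable. Staying instead inside $R=\ZZ/m$, as the paper's discussion suggests, one applies the $I^2=0$ lemma directly with $I=(m')$ for $m'=\prod_i p_i^{r_i}$ and $r_i=\lceil n_i/2\rceil$, so that $(m')^2\equiv 0$ in $\ZZ/m$ and $(\ZZ/m)/(m')\cong\ZZ/m'$; one then inducts downward on $\sum_i n_i$, with the squarefree case (a product of fields) as base.

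The argument is essentially assembly, so there is no deep obstacle; the points requiring care are (i) checking the arithmetic inequality $2(n-1)\geq n$ that forces $I^2=0$ and correctly identifying $\bar R/I$ with the next ring down, and (ii) justifying the product decomposition $\Aut(\AA^1_{\prod_i B_i})\cong\prod_i\Aut(\AA^1_{B_i})$, which follows from decomposing $(\prod_i B_i)[T]$ as a $\prod_i B_i$-algebra along its idempotents. The only place where the actual structure of $\Aut(\AA^1)$ enters is the base case over a domain.
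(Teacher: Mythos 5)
Your proposal is correct and follows essentially the same route as the paper: the paper's one-line proof (``these groups are built by abelians'') implicitly rests on exactly the filtration by the abelian kernels $K_{n,r}(R,q)$ supplied by the $I^2=0$ lemma, together with the base case over the domain $R/q$, and you have simply made those steps explicit. The only cosmetic differences are that you descend one power of $q$ at a time (kernel $K_{n,n-1}$, using $2(n-1)\geq n$) where the paper's discussion suggests halving via $r>n/2$, and that you offer a Chinese Remainder Theorem splitting for $\Aut(\AA^1_{\ZZ/m})$ alongside the paper's direct choice of $I=(m')$; both variants are valid.
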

\begin{proof}
 These groups are built by Abelians.
\end{proof}

\begin{rem}
 The author recognizes that he could have simply proved that $\Aut(\AA^1_{R/q^n})$ was solvable first and then used the fact that $A_d(R,q)$ was a solvable group to prove solvability here but decided to present it this way as this was the way he proved it first.
\end{rem}

\section{``Adjoint representations''} \label{adjoint}

\subsection{An algorithm for computing inverses}\label{inverses}
Let $R$ be a $q$-torsion free ring where $q\in R$ and $qR \in \Spec(R)$. 
We now move to the question of computing inverses in the group $\Aut(\AA^1_{R/q^n})$ efficiently.
Note that if $\psi(T) = T + q^r f(T) \in K_{n,r}(R,q)$ then its inverse is easily computable since the group is abelian and isomorphic $R/q^{n-r}[T]$.

Also note that if $\psi(T) = a_0 + a_1 T \in A_1(R,1) = \Aut(\AA^1_{R/q})$ its inverse is also easily computable. 
Suppose that $\psi(T) \in \Aut(\AA^1_{R/q^n})$ and let $\phi(T)$ be a lift of the inverse of $\pi_{n,r}(\psi)$ where $r>d/2$. Then $\psi \circ \phi \in K_{n,r}(R,q)$ and its inverse is readily computable. This gives a recurrsive algorithm for computing inverses.

This gives us the following recurrsive algorithm for computing inverses
\begin{alg} For $\psi \in \Aut(\AA^1_{R_{n-1}})$ we can compute $\psi^{-1}$ using
\begin{equation}
 \psi^{-1}(T) = 
\begin{cases}
  T-q^rf(T)&, \psi \in K_{n,r}(R,q), r>n/2\\
 -a_1^{-1}a_0 + a_1^{-1}T, & \psi \in A_1(R,q), \\
 \lift(\pi_{n,\lceil n/2 \rceil}(\psi)^{-1}) \circ (\psi \circ \lift( \pi_{n,\lceil n/2\rceil}(\psi)^{-1}))^{-1}, & \psi\in \Aut(\AA^1_{R_{n-1}}) \setminus N_{n,\lceil n/2 \rceil}  \mbox{ and } d\neq 1
\end{cases}
\end{equation}
Where $\lift:\Aut(\AA^1_{R_{n-1}})\to \Aut(\AA^1_{R_{d-1}})$ is just a map of sets such that $\pi_{n,r} \circ \lift = \id$. 
\end{alg}

\subsection{The adjoint representation}
Let $R$ be a $q$-torsion free ring with $qR\in \Spec(R)$. In the previous section we defined the groups
 $$ K_{n,r}(R,q):= \ker( \pi_{n,r}: \Aut(\AA^1_{R_{n-1}}) \to \Aut(\AA^1_{R_{r-1}}))$$
which had the property that they were Abelian when $r>n/2$.
There are the elements of $\Aut(\AA^1_{R_{n-1}})$ which can be thought of as $q$-adically close to the identity and hence should be viewed as the ``Lie algebra'' of $\Aut(\AA^1_{R_{n-1}})$. 
It is natural then to ask if $K_{n,r}(R,q)$ is an $R$-module and if there exists an $R$-linear adjoint action.
The answer to both these questions is yes which we will now show.

In what follows we define the \textbf{Adjoint Action} of $\Aut( \AA^1_{R_{n-1}})$ on $K_{n,r}(R,q)$ by
\begin{equation}
 \Ad_f(g) = f\circ g \circ f^{-1}
\end{equation}
for $f\in \Aut(\AA^1_{R_{n-1}})$ and $g\in K_{n,r}(R,q)$.

Define the $R$-multiplication on $K_{n,r}(R)$ by
\begin{equation}
c\dot g(T) = c \cdot ( T + q^r h(T) ) := T + q^r c h(T).
\end{equation}
Where $g(T) = T + q^r h(T) \in N_{n,r}(R,q)$ and $c\in \OO$.
We have the following theorem
\begin{thm}
 The adjoint action of $\Aut(\AA^1_{R_{n-1}})$ on $K_{n,r}(R)$ is $R$-linear for $r>n/2$. 
 That is, for all $f \in \Aut(\AA^1_{R_{n-1}})$, all $g \in K_{n,r}(R)$ and all $c \in R$ we have
 \begin{equation}
  \Ad_f(c\cdot g) = c\cdot \Ad_f(g).
 \end{equation}
\end{thm}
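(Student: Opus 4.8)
The plan is to compute $\Ad_f(g)$ explicitly in coordinates and to observe that, because $r > n/2$, the result is manifestly linear in the ``$h$-part'' of $g$. Write $g(T) = T + q^r h(T) \in K_{n,r}(R)$ and set $u = f^{-1}(T)$, so that $f(u) = T$ in $R_{n-1}[T]$. First I would expand
\begin{equation*}
\Ad_f(g)(T) = f(g(f^{-1}(T))) = f(u + q^r h(u))
\end{equation*}
by the formal Taylor formula about $u$:
\begin{equation*}
f(u + q^r h(u)) = f(u) + q^r h(u) f'(u) + \sum_{k \geq 2} \frac{q^{rk} h(u)^k}{k!} f^{(k)}(u).
\end{equation*}
Since $r > n/2$ we have $2r \geq n$, so every term with $k \geq 2$ is divisible by $q^{2r}$ and therefore vanishes modulo $q^n$. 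Using $f(u) = T$ this yields the clean formula
\begin{equation*}
\Ad_f(g)(T) \equiv T + q^r \, h(f^{-1}(T)) \, f'(f^{-1}(T)) \mod q^n,
\end{equation*}
which is the heart of the argument.

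With this formula in hand the theorem is almost immediate. Under the identification $g \leftrightarrow h$ of the abelian group $K_{n,r}(R)$ with an additive group of polynomials (the same identification used in Lemma~\ref{prop: kernel} and underlying Lemma~\ref{lem: commuting}), the adjoint action transports to the map
\begin{equation*}
h \longmapsto (h \circ f^{-1}) \cdot (f' \circ f^{-1}),
\end{equation*}
which is visibly $R$-linear in $h$: a scalar $c \in R$ commutes with precomposition by $f^{-1}$ and with multiplication by $f'(f^{-1}(T))$. Concretely, the $R$-multiplication replaces $h$ by $ch$, so applying the formula to $c \cdot g$ gives
\begin{equation*}
\Ad_f(c \cdot g)(T) \equiv T + q^r (ch)(f^{-1}(T)) f'(f^{-1}(T)) = T + q^r c\, h(f^{-1}(T)) f'(f^{-1}(T)) \mod q^n,
\end{equation*}
whereas by the same formula $c \cdot \Ad_f(g)(T) = T + q^r c\, h(f^{-1}(T)) f'(f^{-1}(T))$. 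The two agree, establishing $\Ad_f(c \cdot g) = c \cdot \Ad_f(g)$.

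The only genuine subtlety, and the step I expect to require the most care, is justifying the truncation of the Taylor expansion modulo $q^n$: one must confirm that the expansion is valid with the reduced coefficients $f^{(k)}/k!$ and that substituting $q^r h(u)$ really does push all $k \geq 2$ terms into the ideal $(q^n)$, using $2r \geq n$. I would also record at the outset that $h$ is only well-defined modulo $q^{n-r}$, so the scalar $c$ acts through its image in $R/q^{n-r}$; this is harmless since every expression above is read modulo $q^n$. Once the truncation is in place, the remainder is formal bookkeeping.
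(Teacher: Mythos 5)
Your proposal is correct and follows essentially the same route as the paper: both derive the key formula $\Ad_f(g)(T) \equiv T + q^r h(f^{-1}(T))\,f'(f^{-1}(T)) \bmod q^n$ by substituting $f^{-1}(T) + q^r(ch)(f^{-1}(T))$ into $f$ and discarding all quadratic and higher terms because $q^{2r} \equiv 0 \bmod q^n$ when $r > n/2$ (the paper does this via a term-by-term binomial expansion of the monomials of $f$, which is your Taylor expansion written out explicitly and which sidesteps any worry about dividing by $k!$, since only the $k=0,1$ terms survive). The linearity in $h$ is then read off exactly as you do.
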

\begin{proof}
 Take $f \in \Aut(\AA^1_{R_{n-1}})$ and $g\in K_{n,r}(R,q)$ and write it as $g(T) = T + q^s h(T)$.
 \begin{eqnarray*}
  f \circ (c \cdot g) \circ f^{-1}(T) &=& f( T + c q^s  h(T) ) \circ f^{-1}(T) \\
  &=& f( f^{-1}(T) + c q^s h(f^{-1}(T)) ) \\
  &=& f( f^{-1}(T) + c q^s  h(f^{-1}(T)) ) \\
  &=& a_0 + a_1 (f^{-1}(T) + cq^s  h(f^{-1}(T)) \\
  && + \sum_{j=1}^{n-1} q^{j-1} a_j (f^{-1}(T) + c q^s  h(f^{-1}(T))^j \\
  &=& a_0 + a_1 (f^{-1}(T) ) + \sum_{j=1}^{n} q^{j-1} a_j f^{-1}(T)^j + a_1 c q^s h(f^{-1}(T)) \\
  &&  + \sum_{j=1}^{n} q^{j-1} a_j \left [ \sum_{l=1}^j { j \choose l } f^{-1}(T)^{j-l} ( c q^s  h(f^{-1}(T)) )^l \right ] \\
  &=& T + a_1 c q^s h(f^{-1}(T)) + \sum_{j=1}^{n} q^{j-1} a_j \left [ j f^{-1}(T)^{j-1} ( c q^s  h(f^{-1}(T)) ) \right ] \\
  &=& T + c q^s h(f^{-1}(T)) \left ( a_1  + \sum_{j=1}^{n} q^{j-1} j a_j f^{-1}(T)^{j-1} \right ) \\
  &=& T + c q^s  h(f^{-1}(T)) f'( f^{-1}(T) ) 
 \end{eqnarray*}
Where we reduced the sum in the binomial expansion using the fact that $(q^s)^l = 0 \mod q^n$ for $l>1$.

If we can show that 
\begin{equation}
 f \circ g \circ f^{-1}(T) = T +  q^s  h(f^{-1}(T)) f'( f^{-1}(T) ) 
\end{equation}
we are done. This is indeed the case if we take the above computation with $c=1$.
\end{proof}

This result is surprising since it says that we can study composition of polynomials over $\ZZ/p^n$ using representations. 

\begin{cor}
We have 
\begin{equation}
 N_{2m,m}(R,q) \cong ( R/q^m )^{\oplus m+1} \oplus R/q^m \oplus R/q^{m-1} \oplus \cdots R/q
\end{equation}
as $R$-modules.
\end{cor}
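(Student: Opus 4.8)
The plan is to combine the additive description of the kernel with the $R$-module structure established in the previous subsection. First I would observe that $N_{2m,m}(R,q)$ is abelian: writing $n=2m$ and $r=s=m$ we have $r+s=2m=n$, so Lemma \ref{lem: commuting} applies directly to any pair of elements of $N_{2m,m}(R,q)$, which therefore commute even though $r=n/2$ is only the boundary of the range $r>n/2$. Writing each $f\in N_{2m,m}(R,q)$ as $f(T)=T+q^m h(T)$, the same telescoping computation as in the proof of Lemma \ref{prop: kernel} gives $q^m h_1(T+q^m h_2(T))\equiv q^m h_1(T)\mod q^{2m}$, since the correction terms carry a factor $q^m\cdot q^m=q^{2m}\equiv 0$. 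Hence composition corresponds to $h_1+h_2$, the assignment $f\mapsto h$ identifies $(N_{2m,m}(R,q),\circ)$ with an additive group of polynomials, and the scalar action $c\cdot(T+q^m h)=T+q^m\,ch$ of the previous subsection makes it an $R$-module.

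Next I would determine exactly which polynomials $h=\sum_i c_i T^i$ occur and pin down the cyclic $R$-module each coordinate $c_i$ spans. Membership of $f=T+q^m h$ in $A_{2m}(R,q)$ forces the coefficient of $T^i$ to have the prescribed shape: the constant and linear coefficients are $q^m c_0$ and $1+q^m c_1$, placing $c_0,c_1$ in $R/q^m$; for $i\geq 2$ the coefficient must equal $q^{i-1}a_i$, giving the relation $q^m c_i=q^{i-1}a_i\mod q^{2m}$. Because $R$ is $q$-torsion free, cancelling powers of $q$ is legitimate and yields the clean identifications $q^kR/q^NR\cong R/q^{N-k}R$, which I would use to read off the admissible module of $c_i$ in each degree range.

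The bookkeeping then splits at $i=m$. For $2\leq i\leq m$ the relation forces $a_i=q^{m-i+1}c_i$ and leaves $c_i$ free in $R/q^m$; together with $c_0,c_1$ this produces the $m+1$ summands $(R/q^m)^{\oplus m+1}$ attached to $a_0,\dots,a_m$. For $m+1\leq i\leq 2m$ the factor $q^{i-1}$ already absorbs $q^m$, so $a_i$ is unconstrained in $R/q^{2m-i+1}$ and $c_i=q^{i-1-m}a_i$ ranges over the submodule $q^{i-1-m}(R/q^m)\cong R/q^{2m-i+1}$; as $i$ runs from $m+1$ to $2m$ this gives $R/q^m\oplus R/q^{m-1}\oplus\cdots\oplus R/q$. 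Since the coordinates $c_i$ are independent, each admissible range is closed under multiplication by $c\in R$, and the scalar action simply multiplies each $c_i$ by $c$, the resulting bijection onto the direct sum of these cyclic modules is an $R$-module isomorphism, which is precisely the asserted decomposition.

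The main obstacle I anticipate is nothing conceptual but rather the disciplined tracking of which quotient $R/q^{?}$ a coefficient genuinely inhabits once all three constraints are imposed at once: the special form $q^{i-1}a_iT^i$, the kernel condition $f\equiv T\mod q^m$, and reduction mod $q^{2m}$. The only delicate point is the boundary index $i=m+1$, which still contributes a full $R/q^m$ and must be sorted into the second block; getting that right (and using $q$-torsion-freeness to justify every cancellation of a power of $q$) is where an off-by-one error could otherwise creep in.
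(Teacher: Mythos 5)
The paper states this corollary without proof, so there is no argument to compare against; your proposal correctly supplies the intended one, identifying composition in $N_{2m,m}(R,q)$ with addition of the perturbations $h$ via Lemma \ref{lem: commuting} and then reading off the cyclic module $R/q^{?}$ inhabited by each coefficient, with the split at $i=m$ handled correctly. The bookkeeping, including the boundary case $i=m+1$ contributing the extra $R/q^m$, checks out.
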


\subsection{Examples of explicit representations}
In this section we work over the ``universal ring''
 $$ R = \ZZ[a,b,c,d,1/b][q]. $$

\begin{ex}
 The group $N_{4,2}(R,q)$ consist of a subgroup of degree four polynomials mod $q^4$ and we have $N_{4,2}(R,q) \cong (R/q^2)^{\oplus 4} \oplus R/q$.
\end{ex}

The group action of $A_d(R,q)$ on the group $N_d(R,q)$ by conjugation gives a linear map
$$\Ad: A_d(R,q) \to \GL_{d+1}(R_0)$$.

The kernel of this map contains $N_d(R,q)$ which means that $\Ad_{d-1}(R,q)$ is well defined on the quotient $A_d(R,q)/N_d(R,q) \cong A_{d-1}(R,q)$. 

We can compute several representations of $A_n$ acting on certain subgroups
\begin{ex}
The action of $a + b + q c T^2 + q^2 d T^3 \in A_3(R,q)$ on the normal subgroup $N_{3,1}(R,q)$ yields

$$ \left [\begin{matrix}
      b    &    -a   &  a^2/b & -a^3/b^2 \\
        0     &    1   & -2a/b & 3a^2/b^2 \\
        0     &    0   &    1/b & -3a/b^2 \\
       0     &    0   &      0 &   b^{-2}  
\end{matrix}\right ]
$$

\end{ex}

\begin{ex}
 The action of $a+b + q cT^2 + q^2 d T^3 + q^3 e T^4 \in A_4(R,q)$ on the normal subgroup $N_{4,1}(R,q)$ yields 
 
\begin{equation}
\left[ \begin{matrix}
 b &        -a   &   a^2/b  & -a^3/b^2  &  a^4/b^3 \\
0   &       1    & -2a/b  &3a^2/b^2 &-4a^3/b^3 \\
0    &      0     &   1/b  & -3a/b^2 & 6a^2/b^3 \\
0     &     0     &     0  &   b^{-2}  & -4a/b^3 \\
0      &    0     &     0  &        0   &  b^{-3}
  
 \end{matrix} \right]
\end{equation}
\end{ex}

\begin{ex} The following matrix describes the action of $a+b+qcT^2 + q^2 d T^3 \in A_4(R,q)$ on $N_{2,2}(R,q)$.
\begin{equation}
 \left[\begin{array}{rrrrr}
-\frac{2 \, a c q}{b} + b & \frac{a^{2} c q}{b^{2}} - a &
\frac{a^{2}}{b} & -\frac{a^{4} c q}{b^{4}} - \frac{a^{3}}{b^{2}}
& \frac{a^{4} q}{b^{3}} \\
\frac{2 \, c q}{b} & -\frac{2 \, a c q}{b^{2}} + 1 & -\frac{2 \,
a}{b} & \frac{4 \, a^{3} c q}{b^{4}} + \frac{3 \, a^{2}}{b^{2}}
& -\frac{4 \, a^{3} q}{b^{3}} \\
0 & \frac{c q}{b^{2}} & \frac{1}{b} & -\frac{6 \, a^{2} c
q}{b^{4}} - \frac{3 \, a}{b^{2}} & \frac{6 \, a^{2} q}{b^{3}} \\
0 & 0 & 0 & \frac{4 \, a c q}{b^{4}} + \frac{1}{b^{2}} &
-\frac{4 \, a q}{b^{3}} \\
0 & 0 & 0 & -\frac{c q}{b^{4}} & \frac{q}{b^{3}}
\end{array}\right] 
\end{equation}

\end{ex}

\bibliographystyle{alpha}
\bibliography{../../bib/clean}

\begin{thebibliography}{Dup13}

\bibitem[Dup13]{Dupuy2013}
T.~Dupuy.
\newblock Arithmetic deformation classes associated to curves.
\newblock {\em Thesis}, 2013.

\bibitem[Haz09]{Hazewinkel2009}
M.~Hazewinkel.
\newblock Witt vectors. part 1.
\newblock {\em Handbook of algebra}, 6:319--472, 2009.

\bibitem[Lan52]{Lang1952}
S.~Lang.
\newblock On quasi-algebraic closure.
\newblock {\em Ann. of Math}, 55(2):373--390, 1952.

\bibitem[Lan54]{Lang1954}
S.~Lang.
\newblock Some applications of the local uniformization theorem.
\newblock {\em American Journal of Mathematics}, pages 362--374, 1954.

\end{thebibliography}

\end{document}